\newtheorem{theorem}{Theorem}[section]
\newtheorem{proposition}[theorem]{Proposition}
\newtheorem{corollary}[theorem]{Corollary}
\theoremstyle{definition}
\theoremstyle{remark}
\numberwithin{equation}{section}
\renewcommand{\dim}{\mathrm{dim}}
\newcommand{\dist}{\mathrm{dist}}
\newcommand{\conv}{\mathrm{conv}}
\newcommand{\R}{\mathbb{R}}
\newcommand{\N}{\mathbb{N}}
\newcommand{\X}{\mathrm{X}}
\renewcommand{\H}{\mathrm{H}}
\newcommand{\Y}{\mathrm{Y}}
\newcommand{\Z}{\mathrm{Z}}
\newcommand{\B}{\mathbf{B}}
\renewcommand{\S}{\mathbf{S}}
\renewcommand{\S}{\mathbf{S}}
\renewcommand{\ker}{\mathrm{Ker}}
\renewcommand{\mod}{/}
\begin{document}
\title{Special symmetries of Banach spaces isomorphic to Hilbert spaces}

%    Information for first author
\author[J. Talponen]{Jarno Talponen}
%    Address of record for the research reported here
\address{University of Helsinki, Department of Mathematics and Statistics, Box 68, FI-00014 University of Helsinki, Finland}
\email{talponen@cc.helsinki.fi}
%    \thanks will become a 1st page footnote.

%    General info
\subjclass{Primary 46C15, 46B04; Secondary 46B08, 47B10}
\date{\today}
\begin{abstract}
In this paper Hilbert spaces are characterized among Banach spaces in terms of transitivity 
with respect to nicely behaved subgroups of the isometry group.
For example, the following result is typical: If $\X$ is a real Banach space isomorphic to a Hilbert space 
and convex-transitive with respect to the isometric finite-dimensional perturbations of the identity, 
then $\X$ is already isometric to a Hilbert space.
\end{abstract}
\maketitle

\section{Introduction}
The expression 'special symmetries' in the title refers to suitable subgroups of 
$\mathcal{G}(\X)=\{T\colon \X\rightarrow \X|\ T\ \mathrm{isometric\ automorphism}\}$ where $\X$ is a real Banach space.
We denote the closed unit ball of $\X$ by $\B_{\X}$ and the unit sphere by $\S_{\X}$.
The orbit of $x\in\S_{\X}$ with respect to a family $\mathcal{F}\subset L(\X)$ is given by
$\mathcal{F}(x)=\{T(x)|\ T\in \mathcal{F}\}$.
An inner product $(\cdot|\cdot)\colon \X\times \X\rightarrow \R$ is 
said to be \emph{invariant} with respect to $\mathcal{F}$ if $(T(x)|T(y))=(x|y)$ for each $x,y\in \X,\ T\in \mathcal{F}$.
The concept of an invariant inner product is an important tool applied frequently in this article.
We say that $\X$ is \emph{transitive}, \emph{almost transitive} or \emph{convex-transitive} with respect to 
$\mathcal{F}$ if $\mathcal{F}(x)=\S_{\X}$, $\overline{\mathcal{F}(x)}=\S_{\X}$ or
$\overline{\conv}(\mathcal{F}(x))=\B_{\X}$, respectively, for all $x\in \S_{\X}$. 
If $\mathcal{F}=\mathcal{G}(\X)$ above, then we will omit mentioning it. This article can be regarded as a part
of the field generated around the well-known open \emph{Banach-Mazur rotation problem}, which asks whether each transitive separable
Banach space is isometrically a Hilbert space. See \cite{BR} for an exposition of the topic.

In \cite{Ca} F. Cabello S\'{a}nchez studied the subgroup 
\[\mathcal{G}_{F}=\{T\in \mathcal{G}(\X)|\ \mathrm{Rank}(T-\mathrm{Id})<\infty\}\] 
consisting of the finite-dimensional perturbations of the identity. There a classical result appearing in \cite{Auerbach, Mazur}
is applied, namely, that each finite-dimensional Banach space admits an invariant inner product.
This motivated the work in \cite{Ca}, where an elegant proof was presented for the following result: 
\begin{theorem}\label{thmCa} 
If the norm of $\X$ is transitive with respect to $\mathcal{G}_{F}$, then $\X$ is isometric to 
a Hilbert space.
\end{theorem}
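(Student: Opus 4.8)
The starting point is the structure of $\mathcal{G}_{F}$. First I would isolate the following splitting: for every $T\in\mathcal{G}_{F}$ there is a finite-dimensional $T$-invariant subspace $E$ and a closed $T$-invariant complement $G$ with $\X=E\oplus G$, $T|_{G}=\mathrm{Id}$, and $T|_{E}$ a surjective isometry of $E$. To prove it, put $R=T-\mathrm{Id}$, a finite-rank operator. The decreasing subspaces $\mathrm{range}(R^{j})$ and the increasing subspaces $\ker(R^{j})$ are controlled by $\mathrm{Rank}(R)<\infty$ and so stabilise, say at $E_{\infty}=\mathrm{range}(R^{k})$ and $G_{\infty}=\ker(R^{k})$; a Fitting-type dimension count then gives $\X=E_{\infty}\oplus G_{\infty}$, with both summands $R$-invariant. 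On $G_{\infty}$ the operator $R$ is nilpotent, and if it were nonzero then for a suitable $z\in G_{\infty}$ the vectors $T^{n}z=(\mathrm{Id}+R)^{n}z$ would grow polynomially in norm, contradicting $\|T^{n}z\|=\|z\|$; hence $R|_{G_{\infty}}=0$, i.e. $T|_{G_{\infty}}=\mathrm{Id}$, and one takes $G=G_{\infty}$ and $E=E_{\infty}$ (enlarging $E$ by one vector when convenient). Consequently $\mathcal{G}_{F}$ is the directed union of the subgroups $\Gamma_{E}=\{T\in\mathcal{G}(\X):(T-\mathrm{Id})(\X)\subseteq E\}$ over finite-dimensional $E$; each $T\in\Gamma_{E}$ satisfies $T(E)=E$, and restriction $T\mapsto T|_{E}$ embeds $\Gamma_{E}$ as a subgroup $\mathcal{S}_{E}$ of the compact group $\mathcal{G}(E)$ of surjective isometries of $E$.

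Next, fix $x_{0}\in\S_{\X}$. Transitivity together with the splitting gives, for every $y\in\S_{\X}$, a finite-dimensional $E$ containing $x_{0}$ and $y$ and an $S\in\mathcal{S}_{E}$ with $Sx_{0}=y$; hence $\S_{\X}=\bigcup\{\mathcal{S}_{E}(x_{0}):E\ni x_{0}\text{ finite-dimensional}\}$. The plan is then to manufacture a $\mathcal{G}_{F}$-invariant inner product on $\X$. By the classical Auerbach--Mazur fact, each finite-dimensional $E\ni x_{0}$ carries a $\mathcal{G}(E)$-invariant inner product $q_{E}$---for instance the canonical one determined by the L\"owner ellipsoid of $\B_{E}$, which is automatically invariant---and $q_{E}$ is then $\mathcal{S}_{E}$-invariant. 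Passing to an ultralimit $q(y)=\lim_{E\to\mathcal{U}}q_{E}(y)$ along an ultrafilter on the directed set of such $E$ that refines the order filter, one obtains on each fixed finite-dimensional subspace a limit of positive quadratic forms, hence a quadratic form; these cohere, so $q$ comes from a (semi-)inner product on $\X$. It is $\mathcal{G}_{F}$-invariant: given $T$ and $y$, eventually $E\supseteq\span\{x_{0},y\}+(T-\mathrm{Id})(\X)$, whence $T(E)=E$, $T|_{E}\in\mathcal{S}_{E}$, and $q_{E}(Ty)=q_{E}(y)$. Finally, since $\mathcal{G}_{F}$ is transitive, an invariant $q$ with $q(x_{0})>0$ must be a positive multiple of $\|\cdot\|^{2}$ on $\S_{\X}$, hence everywhere; so the norm satisfies the parallelogram law and $\X$ is a Hilbert space.

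The hard part will be the convergence in that ultralimit: one needs $q_{E}$ to stay uniformly comparable to $\|\cdot\|^{2}|_{E}$ as $E$ grows, since with the L\"owner normalisation the ultralimit could a priori collapse to $0$, while with the dual (John) normalisation it could blow up. In effect one must show that the finite-dimensional subspaces of $\X$ are uniformly isomorphic to Euclidean spaces---equivalently, that $\X$ is already isomorphic to a Hilbert space---or else exhibit a cofinal family of $E$'s along which the $q_{E}$ are uniformly equivalent to the norm. This is exactly where transitivity with respect to $\mathcal{G}_{F}$, and not the mere presence of a few isometries, must enter in an essential way: the abundance of finite-dimensional isometric copies and of the symmetry groups $\mathcal{S}_{E}$ forced by transitivity should pin down the Banach--Mazur distortion, after which the averaging argument closes; and because the limiting $q$ is genuinely invariant, the conclusion is isometric rather than merely isomorphic. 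I expect this uniform-distortion estimate, not the soft steps, to be the crux.
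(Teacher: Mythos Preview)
Your outline is precisely Cabello's argument, which the paper cites rather than reproves; the paper only remarks that in \cite{Ca} ``the analogous construction was suitably normalized by using a special point $x_{0}$.'' That remark is exactly the piece you are missing, and it dissolves what you call the crux.

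The difficulty you flag in the last paragraph is not real. Do not use the L\"owner (or John) normalisation; instead rescale each $q_{E}$ so that $q_{E}(x_{0})=1$. Then the invariance observation you already made does all the work: for any $y\in\S_{\X}$, pick $T\in\mathcal{G}_{F}$ with $Tx_{0}=y$; for every finite-dimensional $E$ containing $x_{0}$ and $(T-\mathrm{Id})(\X)$ one has $T(E)=E$, hence $T|_{E}\in\mathcal{G}(E)$, and therefore $q_{E}(y)=q_{E}(Tx_{0})=q_{E}(x_{0})=1$. Thus for each fixed $y$ the net $(q_{E}(y))_{E}$ is \emph{eventually constant} and equal to $\|y\|^{2}$; the ultralimit exists trivially and $q=\|\cdot\|^{2}$ outright. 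No uniform comparability of $q_{E}$ with $\|\cdot\|^{2}$ on all of $E$ is needed, and in particular the proof does not pass through ``$\X$ is isomorphic to a Hilbert space.'' (You can even drop the ultrafilter: given $u,v\in\X$, take a single $E$ large enough that $q_{E}$ already agrees with $\|\cdot\|^{2}$ at $u,v,u+v,u-v$, and read off the parallelogram identity from $q_{E}$.) So the ``uniform-distortion estimate'' you anticipate is an artefact of the wrong normalisation; with the $x_{0}$-normalisation the soft steps you have written already constitute a complete proof.
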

Cabello raised the question whether this result can be extended to the almost transitive
setting. It turns out here that the answer is affirmative under the additional assumption that $\X$ is isomorphic to a Hilbert space:
\begin{theorem}\label{thm1}
Let $\X$ be a Banach space isomorphic to a Hilbert space.
Then $\X$ is convex-transitive with respect to $\mathcal{G}_{F}$ if and only if $\X$ is isometric to
a Hilbert space. 
\end{theorem}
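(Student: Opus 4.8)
We would prove the forward implication first, which is trivial: in a Hilbert space $\H$, for $x,y\in\S_{\H}$ the rotation of $\span(x,y)$ carrying $x$ to $y$, extended by the identity on $\span(x,y)^{\perp}$, is a rank $\le 2$ perturbation of the identity, so $\H$ is in fact transitive — hence convex-transitive — with respect to $\mathcal{G}_{F}$. So assume now that $\X$ is isomorphic to a Hilbert space and convex-transitive with respect to $\mathcal{G}_{F}$. The plan is to base everything on the reduction: \emph{it suffices to produce an inner product $q$ on $\X$ whose norm $\abs{x}_{q}:=q(x,x)^{1/2}$ is equivalent to the given norm and which is $\mathcal{G}_{F}$-invariant}, i.e. $q(Tx,Ty)=q(x,y)$ for all $T\in\mathcal{G}_{F}$. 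Granting such a $q$, put $M:=\sup_{x\ne 0}\|x\|/\abs{x}_{q}<\infty$. For $u\in\S_{\X}$ every $T\in\mathcal{G}_{F}$ fixes both $\|\cdot\|$ and $\abs{\cdot}_{q}$, so the orbit $\mathcal{G}_{F}(u)$ lies on the $\abs{\cdot}_{q}$-sphere of radius $\abs{u}_{q}$, whence $\overline{\conv}(\mathcal{G}_{F}(u))\subseteq\{v:\abs{v}_{q}\le\abs{u}_{q}\}$; convex-transitivity then forces $\B_{\X}\subseteq\{v:\abs{v}_{q}\le\abs{u}_{q}\}$ for \emph{every} $u\in\S_{\X}$. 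Intersecting over $u$ and rescaling gives $\abs{v}_{q}\le\|v\|/M$ for all $v$, and together with $\|v\|\le M\abs{v}_{q}$ this yields $\|\cdot\|=M\abs{\cdot}_{q}$, so the norm of $\X$ is induced by an inner product and $\X$ is isometric to a Hilbert space.

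To build $q$ we would exploit that $\mathcal{G}_{F}$ is a directed union of compact groups. For a finite-dimensional $E\subseteq\X$ put $H_{E}:=\{T\in\mathcal{G}(\X):\mathrm{Range}(T-\mathrm{Id})\subseteq E\}$. From $\mathrm{Range}(T-\mathrm{Id})\subseteq E$ one gets $T(E)=E$ (since $T|_{E}$ is an injective endomorphism of the finite-dimensional $E$), and then $TS-\mathrm{Id}=(T-\mathrm{Id})S+(S-\mathrm{Id})$ and $T^{-1}-\mathrm{Id}=-T^{-1}(T-\mathrm{Id})$ show that $H_{E}$ is a subgroup of $\mathcal{G}_{F}$; moreover $H_{E_{1}}\cup H_{E_{2}}\subseteq H_{E_{1}+E_{2}}$ and $\bigcup_{E}H_{E}=\mathcal{G}_{F}$. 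The crucial point is that $H_{E}$ is compact for the strong operator topology: using $\dim E<\infty$ and the uniform bound $\|T-\mathrm{Id}\|\le 2$, any net in $H_{E}$ has a subnet converging pointwise to some $\mathrm{Id}+\varphi$ with $\varphi$ of finite rank and range in $E$; this limit is an isometry (being a pointwise limit of isometries) and is onto because $\mathrm{Id}+\varphi$ is Fredholm of index $0$ and injective, so it again lies in $H_{E}$. Since multiplication and inversion are continuous on the uniformly bounded group $H_{E}$, it carries a Haar probability measure. This is the natural extension of the Auerbach--Mazur averaging principle past finite dimensions, and it is the technical heart of the argument.

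The isomorphism with a Hilbert space then enters only to make the set one averages over nonempty. Fix any equivalent inner product $q_{0}$ on $\X$ and let $\mathcal{C}$ be the set of all inner products $q$ with $\lambda^{-1}\|x\|^{2}\le q(x,x)\le\lambda\|x\|^{2}$ for all $x$, where $\lambda$ is chosen so that $q_{0}\in\mathcal{C}$ — possible precisely because $\X$ is isomorphic to a Hilbert space. Then $\mathcal{C}$ is convex, invariant under the action $(T\cdot q)(x,y)=q(T^{-1}x,T^{-1}y)$ of $\mathcal{G}_{F}$ (each $T$ being a $\|\cdot\|$-isometry), and compact in the topology of pointwise convergence on $\X\times\X$. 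Averaging $q_{0}$ over the Haar measure of $H_{E}$ — the orbit map $h\mapsto h\cdot q_{0}$ is continuous into the compact convex set $\mathcal{C}$, so its barycenter lies in $\mathcal{C}$ and is $H_{E}$-invariant — shows that the set $\mathcal{C}^{H_{E}}$ of $H_{E}$-invariant members of $\mathcal{C}$ is nonempty, compact and convex. As $\{\mathcal{C}^{H_{E}}\}_{E}$ is downward directed, the finite intersection property in the compact set $\mathcal{C}$ yields a $q\in\bigcap_{E}\mathcal{C}^{H_{E}}$; since $\mathcal{G}_{F}=\bigcup_{E}H_{E}$, this $q$ is $\mathcal{G}_{F}$-invariant and equivalent to the norm, and the reduction of the first paragraph finishes the proof.

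The step I expect to be the real obstacle is the compactness of $H_{E}$: one must verify carefully that a strong-operator limit of isometric finite-rank perturbations of the identity is again a \emph{surjective} isometry (the Fredholm-index-$0$ point), and then handle the averaging over the — generally infinite-dimensional — compact group $H_{E}$ and the compactness passage from the subgroups $H_{E}$ to all of $\mathcal{G}_{F}$. By contrast, the reduction in the first paragraph, the check that $\mathcal{C}$ is a compact convex $\mathcal{G}_{F}$-invariant set, and the elementary forward implication are routine; note that convex-transitivity itself is used only in the final lines.
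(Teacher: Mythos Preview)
Your proof is correct and the overall architecture matches the paper's: first manufacture an equivalent $\mathcal{G}_{F}$-invariant inner product, then use convex-transitivity to force the two norms to coincide. The reduction paragraph is essentially the argument the paper gives in its final theorem, and your forward implication is the same as the paper's.

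Where you genuinely diverge is in the construction of the invariant inner product. The paper passes to \emph{finite-dimensional quotients}: for each finite-codimensional $E\subset\X$ fixed by a finitely generated piece of $\mathcal{G}$, it averages the transported inner product on $\X/E$ over the compact group $\mathcal{G}_{\X/E}$, and then takes an ultrafilter limit along the directed set of such pairs $(E,G)$; reflexivity of $\X$ is used to show $\|\widehat{x}^{E}\|_{\X/E}\to\|x\|$. You instead stay on $\X$ and exhibit $\mathcal{G}_{F}$ as a directed union of the subgroups $H_{E}=\{T:\mathrm{Range}(T-\mathrm{Id})\subset E\}$, prove each $H_{E}$ is SOT-compact (via Banach--Alaoglu on $L(\X,E)\cong(\X^{\ast})^{\dim E}$ and the Fredholm-index argument for surjectivity of the limit), average over its Haar measure, and conclude by the finite intersection property in the Tychonoff-compact set $\mathcal{C}$ of candidate inner products. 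Your route is arguably more direct for $\mathcal{G}_{F}$ specifically and avoids the quotient machinery and ultrafilter limit; the paper's route, on the other hand, is formulated for any subgroup satisfying the abstract property~$(\ast)$ and does not need to verify compactness of an infinite-dimensional group acting on $\X$. Both uses of ``isomorphic to a Hilbert space'' are the same: it supplies a starting inner product $q_{0}$ with uniform equivalence constants, so that the averaged objects remain in a fixed compact set.
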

This paper is also motivated by the following problems posed in \cite{Ca0,Ca}:
\begin{enumerate}
\item[$\bullet$]{Is an almost transitive Banach space isometric to a Hilbert space if it is isomorphic to one?}
\item[$\bullet$]{Find ideals $J\subset L(\X)$ (with $F\subset J$) for which Theorem \ref{thmCa} remains true if condition 
$T-\mathrm{Id}\in F$ is replaced by $T-\mathrm{Id}\in J$ (here $F$ is the ideal of finite-rank operators).}
\end{enumerate}
Questions of this type are treated here, and we will also show that the existence of an invariant inner product on $\X$ 
is determined by the existence of invariant inner products separately with respect to finitely generated subgroups 
of $\mathcal{G}(\X)$ (see Theorem \ref{thm: ips}).

\subsection{Preliminaries}
We refer to \cite{BR}, \cite{GP}, \cite{HHZ} and \cite{S} for some background information.
Recall that a norm $||\cdot||$ on $\X$ is \emph{maximal} if $\mathcal{G}_{(\X,||\cdot||)}\subset \mathcal{G}_{(\X,|||\cdot|||)}$ 
for an equivalent norm $|||\cdot|||$ implies that $\mathcal{G}_{(\X,||\cdot||)}=\mathcal{G}_{(\X,|||\cdot|||)}$. 
If $\X$ is convex-transitive, then the norm of $\X$ is maximal, see \cite{Co}. 
We denote by $\mathrm{Aut}(\X)$ the group of isomorphisms $T\colon \X\rightarrow \X$.

Given a topological group $G$ we denote by $\mathrm{UCB}(G)$ the space of uniformly continuous bounded functions on $G$.
Here we consider the uniform structure $\Phi_{G}$ of $G$ as being generated by a basis of entourages of diagonal having the form
\begin{equation}\label{eq: uniform}
W=\{(g,h)\in G\times G|\ gh^{-1},\ g^{-1}h\in V\},
\end{equation}
where $V$ runs over a neighbourhood basis of $e$ in $G$.
The space $\mathrm{UCB}(G)$ is endowed with the $||\cdot||_{\infty}$-norm. 

For the sake of convenience we will enumerate the following condition: 
Suppose that there is a positive functional $F\in \mathrm{UCB}(G)^{\ast},\ ||F||=1,$ such that 
\begin{equation}\label{eq: amenable}
F(f(\cdot g))=F(f(\cdot)) \quad \mathrm{for\ all}\ f\in \mathrm{UCB}(G),\ g\in G.
\end{equation}
This type of condition can be viewed as a weaker version of amenability of $G$ (see \cite{R}). 
We note that the rotation group of $L^{p}$ with the strong operator topology is extremely amenable for $1\leq p<\infty$, see \cite{GP}.

Recall that the product topology of $\X^{\X}$ inherited by $L(\X)$ is called the strong operator topology (SOT).

We often consider subgroups $\mathcal{G}\subset \mathcal{G}(\X)$, which enjoy the following property:
\begin{enumerate}
\item[$(\ast)$]{Given $n\in \N$, $T_{1},\ldots,T_{n}\in \mathcal{G}$ and a finite-codimensional subspace $\Z\subset\X$ 
there exists a finite-codimensional subspace $\Y\subset\Z$ such that\\ 
$T_{1}(\Y)=\dots =T_{n}(\Y)=\Y$.}
\end{enumerate} 
Clearly $\mathcal{G}_{F}$ is an example of a subgroup of $\mathcal{G}(\X)$ satisfying $(\ast)$.

It is easy to see that if $\H$ is a Hilbert space, then $\mathcal{G}_{F}\subset \mathcal{G}(\H)$ is dense in $\mathcal{G}(\H)$
in the topology of uniform convergence on compact sets. On the other hand, given a Banach space $\X$ the group 
$\mathcal{G}(\X)$ is $\mathrm{SOT}$-closed in $\mathrm{Aut}(\X)$.

\section{Results}

\begin{theorem}
Let $\X$ be a maximally normed Banach space, which is isomorphic to a Hilbert space. 
Suppose that $\mathcal{G}(\X)$ endowed with the strong operator topology is amenable in the sense of condition \eqref{eq: amenable}.
Then $\X$ is isometrically isomorphic to a Hilbert space.
\end{theorem}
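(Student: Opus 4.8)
\emph{Proof strategy.} The plan is to manufacture a $\mathcal{G}(\X)$-invariant inner product on $\X$ by averaging an auxiliary Hilbertian inner product against the invariant functional supplied by \eqref{eq: amenable}, and then to use maximality of the norm to conclude that the original norm is already the one induced by this invariant inner product.

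Since $\X$ is isomorphic to a Hilbert space, fix an equivalent inner product $(\cdot\,|\,\cdot)$, write $|x| = (x|x)^{1/2}$, and fix $C \ge 1$ with $C^{-1}|x| \le \|x\| \le C|x|$. For $x,y \in \X$ consider $f_{x,y}\colon \mathcal{G}(\X)\to\R$, $f_{x,y}(T) = (Tx|Ty)$, where $\mathcal{G}(\X)$ carries the SOT (a topological group, being SOT-closed in $\mathrm{Aut}(\X)$). First I would check $f_{x,y}\in \mathrm{UCB}(\mathcal{G}(\X))$: boundedness is immediate from the norm equivalence and the fact that every $T$ is a $\|\cdot\|$-isometry, while for uniform continuity I would, given $T,S$ and $U := T^{-1}S$, expand
\[
(Tx|Ty) - (Sx|Sy) = \bigl(Tx\,\big|\,T(y-Uy)\bigr) + \bigl(T(x-Ux)\,\big|\,TUy\bigr)
\]
and bound the right-hand side by $C^{2}(\|x\|+\|y\|)\max\{\|Ux-x\|,\|Uy-y\|\}$ via Cauchy--Schwarz, the norm equivalence, and $\|Ux\|=\|x\|$, $\|Uy\|=\|y\|$. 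Hence $f_{x,y}$ is uniformly continuous for the \emph{right} uniformity, and a fortiori for the finer bilateral uniformity $\Phi_{\mathcal{G}(\X)}$ of \eqref{eq: uniform}. Then, with $F$ the positive, norm-one, right-invariant functional of \eqref{eq: amenable} (so $F(\1)=1$), I would set $\langle x,y\rangle := F(f_{x,y})$. Symmetry and bilinearity are inherited from those of $(\cdot|\cdot)$ and linearity of $F$; positivity of $F$ together with $f_{x,x}(T) = |Tx|^{2} \ge C^{-2}\|x\|^{2}$ gives $C^{-2}\|x\|^{2}\le\langle x,x\rangle\le C^{2}\|x\|^{2}$, so $\langle\cdot,\cdot\rangle$ induces an equivalent Hilbertian norm $|||\cdot|||$. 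Invariance is the point of the construction: for $S\in\mathcal{G}(\X)$ one has $f_{Sx,Sy}(T) = (TSx|TSy) = f_{x,y}(TS)$, so right-invariance of $F$ yields $\langle Sx,Sy\rangle = \langle x,y\rangle$, i.e.\ $\mathcal{G}(\X)=\mathcal{G}_{(\X,\|\cdot\|)}\subset\mathcal{G}_{(\X,|||\cdot|||)}$.

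Finally I would invoke maximality of $\|\cdot\|$: since $|||\cdot|||$ is an equivalent norm whose isometry group contains $\mathcal{G}_{(\X,\|\cdot\|)}$, the two isometry groups coincide. But $|||\cdot|||$ is Hilbertian, so this common group $G$ acts transitively on the sphere $\{\,|||x|||=1\,\}$; since every element of $G$ is also a $\|\cdot\|$-isometry, $\|\cdot\|$ must be constant on that sphere, hence a positive scalar multiple of $|||\cdot|||$. Therefore $\|\cdot\|$ is itself a Hilbert norm, and $\X$ is isometrically isomorphic to a Hilbert space.

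I expect the only genuinely delicate point to be the uniform continuity of $f_{x,y}$: the SOT is strictly coarser than the operator-norm topology, so $T\mapsto Tx$ cannot be controlled uniformly over a sphere. The displayed splitting succeeds precisely because, after writing $U=T^{-1}S$, the increments $x-Ux$ and $y-Uy$ are evaluated at the \emph{fixed} vectors $x,y$, which is exactly why the argument must be routed through the right (equivalently, bilateral) uniformity rather than the left one. The remaining ingredients --- that $f_{Sx,Sy}$ again lies in $\mathrm{UCB}(\mathcal{G}(\X))$ by the same estimate, that $F(\1)=1$ for a positive norm-one functional, and that the isometry group of a Hilbert space is transitive on its sphere --- are standard.
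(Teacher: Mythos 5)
Your proposal is correct and follows essentially the same route as the paper: averaging an auxiliary Hilbertian inner product against the invariant functional $F$ from \eqref{eq: amenable}, verifying that $T\mapsto (Tx|Ty)$ lies in $\mathrm{UCB}(\mathcal{G}(\X))$ for the uniformity \eqref{eq: uniform}, and concluding via maximality together with transitivity of the resulting Hilbertian norm. The only cosmetic difference is that you establish uniform continuity by a direct bilinear splitting with $U=T^{-1}S$, whereas the paper factors the map through $\X\oplus_{2}\X$; both rest on the same observation that isometry lets one evaluate the increments at the fixed vectors $x,y$.
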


\begin{proof}
We may assume without loss of generality that
$(\X,||\cdot||)$ and $(\X,|\cdot|)$ are isomorphic via the identical mapping, where 
$|\cdot|$ is a norm induced by an inner product $(\cdot|\cdot)$ on $\X$. 
We denote by $\mathcal{G}(\X)=\mathcal{G}_{(\X,||\cdot||)}$ and $\mathcal{G}_{(\X,|\cdot|)}$ the 
corresponding rotation groups, and these are regarded with the strong operator topology.
Recall that $\Phi_{\mathcal{G}(\X)}$ is the natural uniformity given by the group $(\mathcal{G}(\X),\mathrm{SOT})$ applied to 
\eqref{eq: uniform}.

Observe that $T\mapsto (Tx|Ty)$ defines a $\Phi_{\mathcal{G}(\X)}$-uniformly continuous map $\mathcal{G}(\X)\rightarrow \R$
for each $x,y\in \X$. Indeed, this map is obtained by composing the $\Phi_{\mathcal{G}(\X)}$-$||\cdot||_{\X\oplus_{2}\X}$ 
uniformly continuous map $\mathcal{G}(\X)\rightarrow \X\oplus_{2} \X,\ T\mapsto (Tx,Ty)$ and the map 
$(Tx,Ty)\mapsto (Tx|Ty)$, which is $||\cdot||_{\X\oplus_{2}\X}$-uniformly continuous as $||\cdot||\sim |\cdot|$. 
To check that $T\mapsto (Tx,Ty)$ is uniformly continuous, first consider a standard entourage 
\[E=\{(x_{1},y_{1},x_{2},y_{2})\in \X\oplus_{2}\X \times \X\oplus_{2} \X:\ ||(x_{1},y_{1})-(x_{2},y_{2})||_{\X\oplus_{2}\X}<\epsilon\}\]
for some $\epsilon >0$. The preimage of this is 
\begin{equation*}
\begin{array}{rll}
& &\{(R,S)\in \mathcal{G}(\X)\times \mathcal{G}(\X):\ ||(Rx,Ry)-(Sx,Sy)||_{\X\oplus_{2}\X}<\epsilon\},\\
&\supset&\{(R,S)\in \mathcal{G}(\X)\times \mathcal{G}(\X):\ ||Tx-Sx||,||Ty-Sy||<\frac{\epsilon}{2}\}\\
&=&\{(R,S)\in \mathcal{G}(\X)\times \mathcal{G}(\X):\ ||x-T^{-1}Sx||,||y-T^{-1}Sy||<\frac{\epsilon}{2}\}.
\end{array}
\end{equation*}
Hence it suffices to pick $V=\{R\in \mathcal{G}(\X):\ ||x-Rx||,||y-Ry||<\frac{\epsilon}{2}\}$ in \eqref{eq: uniform}
to find an entourage of $\Phi_{\mathcal{G}(\X)}$ in the preimage of $E$. We obtain that $T\mapsto (Tx,Ty)$ is 
$\Phi_{\mathcal{G}(\X)}$-uniformly continuous.

According to the assumptions there is $F\in \mathrm{UCB}(\mathcal{G}(\X))^{\ast},\ ||F||=1,$ such that 
$F(f(\cdot g))=F(f(\cdot))$ for $f\in \mathrm{UCB}(\mathcal{G}(\X))$ and $g\in \mathcal{G}(\X)$.
For each $x,y\in \X$ we put 
\[[x|y]=F(\{(g(x)|g(y))\}_{g\in \mathcal{G}(\X)}).\]
This definition is sensible, since $g\mapsto (g(x)|g(y))$ defines an element 
in $\mathrm{UCB}(\mathcal{G}(\X))$ for each $x,y\in \X$.
We claim that $[\cdot|\cdot]$ defines an inner product on $\X$ such that 
$|||x|||\stackrel{\cdot}{=}\sqrt{[x|x]}$ is equivalent to $||\cdot||$.
Indeed, first note that $[\cdot|\cdot]\colon (\X,||\cdot||)\oplus_{2} (\X,||\cdot||)\rightarrow \R$ is defined and bounded, 
since $(\cdot|\cdot)\colon (\X,||\cdot||)\oplus_{2} (\X,||\cdot||)\rightarrow \R$ 
is bounded and $||F||=1$. By using the bilinearity of $(\cdot|\cdot)$ and the linearity of $F$ we obtain that 
$[\cdot|\cdot]$ is bilinear. Let $C\geq 1$ such that $C^{-2}||\cdot||^{2}\leq |\cdot|^{2}\leq C^{2}||\cdot||^{2}$.
Since $F$ is positive and norm-$1$, we get that 
\[C^{-2}||x||^{2}=\inf_{g}C^{-2}||g(x)||^{2}\leq F(\{(g(x)|g(x))\}_{g\in \mathcal{G}(\X)})\leq 
\sup_{g}C^{2}||g(x)||=C^{2}||x||,\]
where $x\in \X$ and the supremum and infimum are taken over $\mathcal{G}(\X)$. 
This means that $[\cdot|\cdot]$ is an inner product on $\X$ such that 
$|||\cdot|||$ is equivalent to $||\cdot||$.

Observe that 
\[[h(x)|h(y)]=F(\{(gh(x)|gh(y))\}_{g\in\mathcal{G}(\X)})=F(\{(g(x)|g(y))\}_{g\in\mathcal{G}(\X)})=[x|y]\]
for each $h\in \mathcal{G}(\X)$. The maximality of the norm of $(\X,||\cdot||)$ yields that 
$\mathcal{G}_{(\X,||\cdot||)}=\mathcal{G}_{(\X,|||\cdot|||)}$. The proof is completed by a standard argument
using the fact that $(\X,|||\cdot|||)$ is transitive.
\end{proof}

Suppose that $\X$ is a Banach space with two equivalent norms $||\cdot||$ and $|||\cdot|||$ such that the group
$\mathcal{G}$ generated by $\mathcal{G}_{(\X,||\cdot||)}\cup \mathcal{G}_{(\X,|||\cdot|||)}$ is 
operator norm bounded. Then there is one more equivalent norm $||||\cdot||||$ on $\X$ given by 
$||||x||||=\sup_{g\in\mathcal{G}}||g(x)||$ and this is $\mathcal{G}$-invariant. Consequently,
if the norms $||\cdot||$ and $|||\cdot|||$ are additionally maximal (resp. convex-transitive), then
$\mathcal{G}_{(\X,||\cdot||)}=\mathcal{G}_{(\X,|||\cdot|||)}$ (resp. $||\cdot||=c |||\cdot|||$ for some constant $c>0$).

The argument employed in the proof of \cite[Lemma 2]{Ca} can be modified to obtain the following dichotomy regarding 
the existence of invariant inner products.

\begin{theorem}\label{thm: ips}
Let $\X$ be a Banach space and $C\geq 1$. Suppose that for each $n\in\N$ and $T_{1},\ldots,T_{n}\in\mathcal{G}(\X)$
there exists an inner product $(\cdot|\cdot)_{\ast}\colon \X\times\X\rightarrow \R$ invariant under the rotations
$T_{1},\ldots,T_{n}$ such that $C^{-2}\ ||x||^{2}\leq (x|x)_{\ast}\leq C^{2}\ ||x||^{2}$ for each $x\in \X$.
Then there is already an inner product $(\cdot|\cdot)_{\X}\colon \X\times \X\rightarrow \R$, which is invariant under $\mathcal{G}(\X)$
and satisfies $C^{-2}\ ||x||^{2}\leq (x|x)\leq C^{2}\ ||x||^{2}$ for $x\in\X$.
\end{theorem}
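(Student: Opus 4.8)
The plan is to run a Tychonoff compactness, or equivalently ultrafilter, argument on the space of all symmetric bilinear forms compatible with the prescribed two-sided bound, and then to extract the invariant inner product from a point lying in the intersection of all the ``invariance'' subsets.

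First I would introduce $\mathcal{B}$, the set of all symmetric bilinear forms $b\colon\X\times\X\to\R$ with $C^{-2}\|x\|^2\le b(x,x)\le C^2\|x\|^2$ for all $x\in\X$. Applying the hypothesis with $n=1$ and $T_1=\mathrm{Id}$ shows that $\mathcal{B}\neq\emptyset$. The polarization identity $b(x,y)=\tfrac12\big(b(x+y,x+y)-b(x,x)-b(y,y)\big)$ together with the two bounds gives $|b(x,y)|\le\tfrac{C^2}{2}(\|x\|+\|y\|)^2$ for every $b\in\mathcal{B}$, so, viewing $b$ as the point $(b(x,y))_{(x,y)\in\X\times\X}$ of $\R^{\X\times\X}$, the set $\mathcal{B}$ sits inside the product $\prod_{(x,y)}\big[-\tfrac{C^2}{2}(\|x\|+\|y\|)^2,\ \tfrac{C^2}{2}(\|x\|+\|y\|)^2\big]$, which is compact by Tychonoff's theorem. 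Moreover $\mathcal{B}$ is closed in this product: symmetry, additivity and $\R$-homogeneity in each variable, and the two inequalities, are each conditions involving only finitely many coordinates and are preserved under pointwise limits, so $\mathcal{B}$ is an intersection of closed sets, hence compact (and convex, though convexity will not be needed).

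Next, for every finite $S=\{T_1,\dots,T_n\}\subset\mathcal{G}(\X)$ put $\mathcal{B}_S=\{b\in\mathcal{B}: b(T_ix,T_iy)=b(x,y)\ \text{for }1\le i\le n,\ x,y\in\X\}$. Each $\mathcal{B}_S$ is closed in $\mathcal{B}$ by the same reasoning, and it is nonempty by hypothesis. Since $\mathcal{B}_{S_1}\cap\dots\cap\mathcal{B}_{S_k}=\mathcal{B}_{S_1\cup\dots\cup S_k}$, the family $\{\mathcal{B}_S\}_S$ has the finite intersection property, so compactness of $\mathcal{B}$ yields a form $b\in\bigcap_S\mathcal{B}_S$, where the intersection runs over all finite $S\subset\mathcal{G}(\X)$. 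This $b$ is a symmetric bilinear form satisfying $C^{-2}\|x\|^2\le b(x,x)\le C^2\|x\|^2$ and, since $b\in\mathcal{B}_{\{T\}}$ for each $T$, invariant under every $T\in\mathcal{G}(\X)$; the lower bound makes $b(x,x)>0$ for $x\neq0$, so $b$ is positive definite. Setting $(\cdot|\cdot)_{\X}=b$ completes the proof. (Alternatively one fixes an ultrafilter $\mathcal{U}$ on the finite subsets of $\mathcal{G}(\X)$ containing every set $\{S: T\in S\}$, picks $b_S\in\mathcal{B}_S$, and defines $b(x,y)=\lim_{S\to\mathcal{U}}b_S(x,y)$; this is the form in which the argument of \cite[Lemma 2]{Ca} adapts most directly.)

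I do not expect a real obstacle. The two points that need care are the derivation, via polarization, of a uniform bound on $|b(x,y)|$ from the hypothesis, since without it the ambient product would fail to be compact, and the verification that all the defining relations of $\mathcal{B}$ and of $\mathcal{B}_S$ are closed conditions in the product topology. It is worth emphasizing that having the two-sided estimate $C^{-2}\|x\|^2\le(x|x)_{\ast}\le C^2\|x\|^2$, rather than merely some invariant inner product, is exactly what forces the limiting form to be positive definite and norm-equivalent to $\|\cdot\|$, so that no separate nondegeneracy argument is required.
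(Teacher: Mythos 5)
Your proof is correct and is essentially the same compactness argument as the paper's: the paper picks one invariant inner product per finitely generated subgroup of $\mathcal{G}(\X)$ and takes an ultrafilter limit of the resulting net, which is exactly the ultrafilter variant you mention, while your Tychonoff/finite-intersection-property packaging is an equivalent formulation. The polarization bound you supply to keep the forms in a compact product plays the role of the Cauchy--Schwarz estimate $(x|y)_{\gamma}\leq C^{2}\,\|x\|\,\|y\|$ used in the paper.
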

\begin{proof}
We may assume without loss of generality that $\mathcal{G}(\X)$ is not finitely generated. 
Let $\mathcal{N}$ be the net of finitely generated subgroups of $\mathcal{G}(\X)$ ordered by inclusion.
By the assumptions we may assign for each $\gamma\in \mathcal{N}$ an inner product 
$(\cdot|\cdot)_{\gamma}\colon \X\times \X\rightarrow \R$ invariant under $\gamma$ and satisfying
$C^{-1}||x||^{2}\leq (x|x)_{\gamma}\leq C||x||^{2}$ for $x\in\X$. Observe that the sets 
$\{\gamma\in \mathcal{N}|\ \delta\subset \gamma\}$, where $\delta\in\mathcal{N}$, form a filter base of a filter 
$\mathcal{F}$ on $\mathcal{N}$. Let us extend $\mathcal{F}$ to an ultrafilter $\mathcal{U}$ on $\mathcal{N}$. 
Note that $\mathcal{U}$ is non-principal, since for each $\eta\in \mathcal{N}$ there is $\delta\in \mathcal{N}$
with $\eta\subsetneq \delta$, so that $\eta\notin \{\gamma\in \mathcal{N}|\ \delta\subset \gamma\}\in \mathcal{U}$.

Define $B\colon \X\times \X\rightarrow \R^{\mathcal{N}}$ by setting $B(x,y)=\{(x|y)_{\gamma}\}_{\gamma\in \mathcal{N}}$
for $x,y\in \X$. We will consider $\R^{\mathcal{N}}$ equipped with the usual point-wise linear structure. Then
$B$ becomes a symmetric and bilinear map. Moreover, $B(x,x)\geq 0$ point-wise for $x\in \X$.
Put $\overset{\rightarrow}{B}\colon \X\times\X\rightarrow \R,\ \overset{\rightarrow}{B}(x,y)=\lim_{\mathcal{U}}B(x,y)$ for $x,y\in\X$.
Indeed, the above limit exists and is finite for all $x,y\in \X$, 
since $(x|y)_{\gamma}\leq \sqrt{(x|x)_{\gamma}(y|y)_{\gamma}}\leq C^{2}\ ||x||\ ||y||$ for all $\gamma\in \mathcal{N},\ x,y\in\X$. 
Moreover, similarly we get that
$C^{-2}||x||^{2}\leq \overset{\rightarrow}{B}(x,x)\leq C^{2}||x||^{2}$ for all $x\in\X$. 
It follows that $\overset{\rightarrow}{B}$ is an inner product on $\X$. 

Observe that for all $T\in \mathcal{G}(\X)$ and $x,y\in\X$ we have that 
\[\{\gamma\in \mathcal{N}|\ (Tx|Ty)_{\gamma}=(x|y)_{\gamma}\}\supset \{\gamma\in\mathcal{N}|\ T\in \gamma\}\in \mathcal{F}\subset \mathcal{U}.\]
Hence $\overset{\rightarrow}{B}(Tx,Ty)=\overset{\rightarrow}{B}(x,y)$ for $T\in \mathcal{G}(\X)$ and $x,y\in\X$.
Consequently, $\overset{\rightarrow}{B}$ is the required inner product.
\end{proof}

It is not known if an almost transitive Banach space isomorphic to a Hilbert space is in fact isometric to a
Hilbert space (see \cite{Ca0}). The following consequence of Theorem \ref{thm: ips} provides a partial answer to this problem.

\begin{corollary}
Let $\X$ be a maximally normed Banach space, $\H$ a Hilbert space and $C\geq 1$. Suppose that for any $n\in\N$ and 
$T_{1},\ldots T_{n}\in\mathcal{G}(\X)$ there exists an isomorphism $\phi\colon \X\rightarrow \H$ such that 
$\max(||\phi||,||\phi^{-1}||)\leq C$ and $||\phi(x)||=||\phi(T_{i}x)||$ for $x\in \X$ and $i\in \{1,\ldots,n\}$.
Then $\X$ is already isometric to $\H$.
\end{corollary}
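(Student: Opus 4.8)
The plan is to reduce the statement to Theorem \ref{thm: ips} by pulling the Hilbertian norm of $\H$ back to $\X$ along the isomorphisms $\phi$, and then to upgrade the resulting $\mathcal{G}(\X)$-invariant inner product to a genuine isometry, using maximality of the norm together with the transitivity of the rotation group of a Hilbert space.

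First I would fix $n\in\N$ and $T_{1},\ldots,T_{n}\in\mathcal{G}(\X)$, take the isomorphism $\phi\colon\X\to\H$ provided by the hypothesis, and set $(x|y)_{\ast}=(\phi x|\phi y)_{\H}$ for $x,y\in\X$. Since $\phi$ is linear and $(\cdot|\cdot)_{\H}$ is a symmetric bilinear form, $(\cdot|\cdot)_{\ast}$ is a symmetric bilinear form, and it is positive definite because $\phi$ is injective. From $\max(||\phi||,||\phi^{-1}||)\le C$ one gets $C^{-1}||x||\le ||\phi x||_{\H}\le C||x||$, hence $C^{-2}||x||^{2}\le (x|x)_{\ast}\le C^{2}||x||^{2}$. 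Invariance under each $T_{i}$ follows from the polarization identity together with linearity of $\phi$: since $\phi(T_{i}(x\pm y))=\phi(T_{i}x)\pm\phi(T_{i}y)$ and the hypothesis gives $||\phi(T_{i}z)||_{\H}=||\phi z||_{\H}$ for every $z\in\X$, we obtain
\[(T_{i}x|T_{i}y)_{\ast}=\tfrac{1}{4}\left(||\phi(T_{i}(x+y))||_{\H}^{2}-||\phi(T_{i}(x-y))||_{\H}^{2}\right)=\tfrac{1}{4}\left(||\phi(x+y)||_{\H}^{2}-||\phi(x-y)||_{\H}^{2}\right)=(x|y)_{\ast}.\]
Thus the hypotheses of Theorem \ref{thm: ips} are met, and it produces an inner product $(\cdot|\cdot)_{\X}$ on $\X$, invariant under all of $\mathcal{G}(\X)$, with $C^{-2}||x||^{2}\le (x|x)_{\X}\le C^{2}||x||^{2}$.

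Next I would put $|||x|||=\sqrt{(x|x)_{\X}}$; this is an equivalent Hilbertian norm on $\X$, and it is $\mathcal{G}(\X)$-invariant, so $\mathcal{G}_{(\X,||\cdot||)}\subset\mathcal{G}_{(\X,|||\cdot|||)}$. Maximality of $||\cdot||$ then forces $\mathcal{G}_{(\X,||\cdot||)}=\mathcal{G}_{(\X,|||\cdot|||)}$. Since $(\X,|||\cdot|||)$ is a Hilbert space, its rotation group acts transitively on $\S_{(\X,|||\cdot|||)}$; fixing $x_{0}$ with $|||x_{0}|||=1$ gives $\mathcal{G}_{(\X,||\cdot||)}(x_{0})=\S_{(\X,|||\cdot|||)}$, and as every element of $\mathcal{G}_{(\X,||\cdot||)}$ preserves $||\cdot||$, the function $||\cdot||$ is constant on $\S_{(\X,|||\cdot|||)}$, equal to $||x_{0}||$. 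By homogeneity $||\cdot||=||x_{0}||\,|||\cdot|||$, so $||\cdot||$ is itself induced by the inner product $||x_{0}||^{2}(\cdot|\cdot)_{\X}$, i.e. $(\X,||\cdot||)$ is a Hilbert space. Finally, $(\X,||\cdot||)$ is linearly isomorphic to $\H$ (via any $\phi$), so the two Hilbert spaces have the same density character and are therefore isometrically isomorphic, which is the assertion.

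The only point that is not pure bookkeeping is the passage from the equality of rotation groups to the proportionality $||\cdot||=c\,|||\cdot|||$; this is the standard argument invoked at the end of the proof of the first theorem of this section, and it is precisely where transitivity of the orthogonal group of a Hilbert space is used. The remaining ingredients — the norm estimates needed to invoke Theorem \ref{thm: ips}, the polarization computation verifying $T_{i}$-invariance, and the density-character remark identifying two isomorphic Hilbert spaces — are all routine.
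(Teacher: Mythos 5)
Your proposal is correct and follows essentially the same route as the paper: pull back the Hilbert norm via $\phi$ to get the $T_{i}$-invariant inner products, invoke Theorem \ref{thm: ips}, and then use maximality plus the transitivity of the Hilbertian norm $|||\cdot|||$ to conclude $||\cdot||=c|||\cdot|||$. Your added details (the polarization argument for $T_{i}$-invariance and the density-character remark identifying $\X$ with $\H$) are correct elaborations of steps the paper leaves implicit.
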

\begin{proof}
By putting $(x|y)_{\ast}=(\phi(x)|\phi(y))_{\H}$ for each $T_{1},\ldots ,T_{n}$ we obtain the assumptions of Theorem \ref{thm: ips}.
Let $(\cdot|\cdot)_{\X}\colon \X\times \X\rightarrow \R$ be the resulting inner product. Then $\X$ endowed with the 
norm $|||x|||\stackrel{\cdot}{=}\sqrt{(x|x)_{\X}}$ is transitive being a Hilbert space. Since $\X$ is maximally normed, we get that 
$\mathcal{G}_{(\X,||\cdot||)}=\mathcal{G}_{(\X,|||\cdot|||)}$. Thus $\X$ is transitive. It follows that 
$||\cdot||=c|||\cdot|||$ for some $c>0$, and hence $\X$ is a Hilbert space.
\end{proof}

\begin{theorem}\label{thm: innerprod}
Let $(\X,||\cdot||)$ be a Banach space, $(\H,(\cdot|\cdot)_{\H})$ an inner product space, $\mathcal{G}\subset \mathcal{G}(\X)$ 
a subgroup satisfying $(\ast)$ and let $S\colon \X\rightarrow \H$ be an isomorphism. 
Then there exists an inner product $(\cdot|\cdot)_{\X}$ on $\X$ such that 
\begin{enumerate}
\item[(1)]{$||S^{-1}||^{-2}\ ||x||^{2}\leq (x|x)_{\X}\leq ||S||^{2}\ ||x||^{2}$ for $x\in \X$.}
\item[(2)]{$(Tx|Ty)_{\X}=(x|y)_{\X}$ for $x,y\in \X$ and $T\in \overline{\mathcal{G}}^{\mathrm{SOT}}\subset L(\X)$.}
\end{enumerate}
\end{theorem}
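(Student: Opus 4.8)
The plan is to build the inner product $(\cdot\mid\cdot)_{\X}$ by the same averaging-against-an-ultrafilter device used in the proof of Theorem \ref{thm: ips}, but now over the directed set of finite-codimensional subspaces of $\X$ that are simultaneously invariant under finitely many group elements, exploiting property $(\ast)$ to make this set cofinal. First I would reduce to the case where $S$ is the identity, i.e.\ regard $\H$ as $(\X,\abs{\cdot})$ where $\abs{x}=\|Sx\|_{\H}$, so that $\|S^{-1}\|^{-1}\|x\|\le\abs{x}\le\|S\|\,\|x\|$. Fix $T_{1},\dots,T_{n}\in\mathcal{G}$ and a finite-codimensional $\Z$; by $(\ast)$ there is a finite-codimensional $\Y\subset\Z$ with $T_{i}(\Y)=\Y$ for all $i$. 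On the finite-dimensional quotient $\X/\Y$ (or on a complement of $\Y$), the $T_{i}$ act, and by the Auerbach--Mazur theorem cited in the introduction one gets an invariant inner product there; pulling back and adding a fixed copy of $(\cdot\mid\cdot)_{\H}$ restricted suitably to $\Y$-directions, one manufactures, for each such configuration, an inner product on $\X$ equivalent to $\abs{\cdot}$ with the stated constants and invariant under $T_{1},\dots,T_{n}$. Actually the cleanest route avoids ad hoc surgery: apply Theorem \ref{thm: ips} verbatim to the subgroup $\mathcal{G}$, using the hypothesis that each finitely generated subgroup of $\mathcal{G}$ preserves some inner product $C^{-2}$-equivalent to $\|\cdot\|^{2}$ with $C=\max(\|S\|,\|S^{-1}\|)$ — but that hypothesis must itself be produced, which is exactly where $(\ast)$ enters.

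Here is the precise chain I would carry out. Step 1: Let $\mathcal{N}$ be the net of finitely generated subgroups of $\mathcal{G}$, ordered by inclusion. Step 2: Given $\gamma=\langle T_{1},\dots,T_{n}\rangle\in\mathcal{N}$, I want an inner product invariant under $\gamma$. Choose, using $(\ast)$ with $\Z=\X$, a finite-codimensional $\Y_{\gamma}$ with $T_{i}\Y_{\gamma}=\Y_{\gamma}$ for all $i$; iterating $(\ast)$ is unnecessary since it already handles finitely many operators at once. Now $\gamma$ acts on the finite-dimensional space $\X/\Y_{\gamma}$, which carries a $\gamma$-invariant inner product $q_{0}$ (Auerbach--Mazur). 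Lift the norm $\abs{\cdot}=\|S\cdot\|_{\H}$: write $q$ for the inner product on $\X$ induced by $S$, restrict to $\Y_{\gamma}$, and set, for $x\in\X$ with class $\bar x\in\X/\Y_{\gamma}$, a decomposition via a $\gamma$-invariant projection $P_{\gamma}$ onto a $\gamma$-invariant complement $\mathrm{F}_{\gamma}$ of $\Y_{\gamma}$ (such $P_{\gamma}$ exists because $\gamma$ is a finite group acting on the finite-dimensional $\mathrm{span}$ of the orbits — average any projection). Define $(x\mid y)_{\gamma}=q\big((I-P_{\gamma})x,(I-P_{\gamma})y\big)+q_{0}\big(P_{\gamma}x,P_{\gamma}y\big)$ after rescaling $q_{0}$ so that $q_{0}$ and $q|_{\mathrm{F}_{\gamma}}$ have matched constants; one checks $\gamma$-invariance from $P_{\gamma}\gamma=\gamma P_{\gamma}$ and the invariance of the two summands, and the constant bound from $C^{-2}\|x\|^{2}\le\abs{x}^{2}\le C^{2}\|x\|^{2}$. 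Step 3: Feed the family $\{(\cdot\mid\cdot)_{\gamma}\}_{\gamma\in\mathcal{N}}$ into the ultrafilter-limit construction of Theorem \ref{thm: ips}: form $B(x,y)=\{(x\mid y)_{\gamma}\}$, take a non-principal ultrafilter $\mathcal{U}$ refining the filter of up-sets $\{\gamma:\delta\subset\gamma\}$, and set $(x\mid y)_{\X}=\lim_{\mathcal{U}}B(x,y)$. Finiteness of the limit and the two-sided bound (1) follow as there. Step 4: For $T\in\mathcal{G}$, the set $\{\gamma:T\in\gamma\}$ lies in $\mathcal{U}$ and on it $(Tx\mid Ty)_{\gamma}=(x\mid y)_{\gamma}$, so $(Tx\mid Ty)_{\X}=(x\mid y)_{\X}$; this gives invariance under $\mathcal{G}$. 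Step 5: To upgrade to $\overline{\mathcal{G}}^{\mathrm{SOT}}$, note that for fixed $x,y$ the map $T\mapsto(Tx\mid Ty)_{\X}$ is SOT-continuous on $L(\X)$ (it is the composition of $T\mapsto(Tx,Ty)\in\X\oplus_{2}\X$, continuous into the SOT-induced product, with the bounded bilinear form $(\cdot\mid\cdot)_{\X}$, which is $\|\cdot\|$-continuous by (1)); since it is constantly $(x\mid y)_{\X}$ on $\mathcal{G}$ it is constant on the SOT-closure, giving (2).

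The main obstacle I anticipate is Step 2: producing, for each finitely generated $\gamma$, an inner product that is both genuinely $\gamma$-invariant on all of $\X$ and two-sidedly pinned between $\|S^{-1}\|^{-2}\|\cdot\|^{2}$ and $\|S\|^{2}\|\cdot\|^{2}$ — not merely between some larger constants. The naive move of averaging $q$ over the finite quotient-group action loses control of the constants on the infinite-dimensional part unless one is careful to leave the $\Y_{\gamma}$-directions, where $\gamma$ acts trivially on the relevant quotient, \emph{untouched}; this is why the invariant complement $\mathrm{F}_{\gamma}$ and the invariant projection $P_{\gamma}$ are essential, and why $(\ast)$ (which hands us a large invariant $\Y_{\gamma}$ \emph{inside any prescribed finite-codimensional $\Z$}) is exactly the right hypothesis. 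A secondary technical point is checking that $\gamma$ acts as a \emph{finite} group on $\mathrm{span}$ of the orbits of a basis of $\mathrm{F}_{\gamma}$, which follows because $\gamma$ is finitely generated by isometries permuting a finite-codimensional invariant subspace, hence acts through a finite quotient on the finite-dimensional piece — the place where one genuinely uses that the perturbation is finite-codimensional rather than merely compact. Once Step 2 is in place, Steps 3--5 are routine transcriptions of arguments already in the paper.
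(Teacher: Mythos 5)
There is a genuine gap, and it sits exactly where you predicted trouble: Step 2. Your candidate form $(x\mid y)_{\gamma}=q\bigl((I-P_{\gamma})x,(I-P_{\gamma})y\bigr)+q_{0}\bigl(P_{\gamma}x,P_{\gamma}y\bigr)$ is not $\gamma$-invariant. Property $(\ast)$ only gives $T(\Y_{\gamma})=\Y_{\gamma}$; it does not make $T$ act on $\Y_{\gamma}$ as the identity, nor as an isometry of the pulled-back Hilbert form $q$. So for $u,v\in\Y_{\gamma}$ there is no reason that $q(Tu,Tv)=q(u,v)$: the restriction $T|_{\Y_{\gamma}}$ is merely a $\|\cdot\|$-isometry of an infinite-dimensional space, and finding a $\gamma$-invariant inner product there is the same problem you started with — the reduction is circular. (This is also why your remark that the $\Y_{\gamma}$-directions are ``where $\gamma$ acts trivially'' is not right: $\gamma$ preserves $\Y_{\gamma}$ but acts on it nontrivially.) Two secondary problems: a finitely generated group of isometries of a finite-dimensional space need not be finite (an irrational rotation of $\R^{2}$ generates an infinite group), so you cannot average a projection over $\gamma$ as a finite group — you would have to pass to the closure of the induced group in the compact rotation group of $\X/\Y_{\gamma}$ and use Haar measure; and even granting an invariant projection $P_{\gamma}$, the direct-sum formula only yields constants degraded by $\|P_{\gamma}\|$, not the sharp bounds $\|S^{-1}\|^{-2}\|x\|^{2}\le(x\mid x)_{\X}\le\|S\|^{2}\|x\|^{2}$ of conclusion (1).

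The paper's proof escapes this by \emph{not} trying to make each approximating form positive definite on $\X$. It indexes the construction by pairs $(E,G)$ ($E$ finite-codimensional and $G$-invariant, $G$ finitely generated), pulls back the natural quotient inner product of $\H/S(E)$ to the finite-dimensional quotient $\X/E$ via $\widehat{S}_{E}$, and averages it over the \emph{entire} compact rotation group $\mathcal{G}_{\X/E}$ with Haar measure; the result $B(x,y)(E,G)$ depends on $x$ only through $\widehat{x}^{E}$, hence is automatically invariant under every $T\in G$ but is degenerate (it vanishes on $E$). Positive definiteness with the correct lower constant is recovered only after the ultrafilter limit, using $\sup_{(E,G)}\dist(x,E)=\|x\|$, which is proved via weak compactness of balls ($\X$ is reflexive, being isomorphic to a Hilbert space) — a step your outline has no counterpart for and would need. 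Your Steps 1, 3, 4 and 5 (the ultrafilter limit, the invariance on a $\mathcal{U}$-large set, and the SOT-continuity upgrade) do match the paper, but the heart of the argument — how to get invariance without sacrificing the lower bound — is the quotient-plus-Haar-average device, which your proposal is missing.
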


\begin{proof}
It suffices to find $(\cdot|\cdot)_{\X}$, which satisfies conclusion (1) and conclusion (2) for merely $T\in \mathcal{G}$.
Indeed, given $T\in \overline{\mathcal{G}}^{\mathrm{SOT}}$ and $x,y\in \X$ there is a sequence 
$(T_{n})\subset \mathcal{G}$ such that $T_{n}(x)\rightarrow T(x)$ and $T_{n}(y)\rightarrow T(y)$ as $n\rightarrow\infty$.
This yields that $(T(x)|T(y))_{\X}-(x|y)_{\X}=\lim_{n\rightarrow \infty}((T_{n}(x)|T_{n}(y))_{\X}-(x|y)_{\X})=0$
by using the $\mathcal{G}$-invariance and the $||\cdot||$-continuity of $(\cdot|\cdot)_{\X}$.

Let $\mathcal{M}$ be the set of all pairs $(E,G)$, where $E\subset\X$ is a finite-codimensional subspace
and $G\subset\mathcal{G}$ is a finitely generated subgroup such that $T(E)=E$ for $T\in G$. 

According to the definition of $\mathcal{G}$ we obtain that $\bigcup_{(E,G)\in \mathcal{M}}G=\mathcal{G}$ and\\ 
$\bigcap_{(E,G)\in \mathcal{M}}E=\{0\}$. We equip $\mathcal{M}$ with the partial order $\leq$ defined as follows: 
$(E_{1},G_{1})\leq (E_{2},G_{2})$ if $E_{1}\supset E_{2}$ and $G_{1}\subset G_{2}$. So, $(\mathcal{M},\leq)$ is a directed set.

Suppose that $\Y\subset \H$ is a subspace of a Hilbert space and $\H\mod \Y$ is the corresponding quotient space. 
Then there exists a natural inner product on $\H\mod \Y$, namely 
\[(\widehat{x}^{\Y}|\widehat{y}^{\Y})_{\H\mod\Y}=(x-P_{\Y}x|y-P_{\Y}y)_{\H},\quad x,y\in \H,\]
where $\widehat{x}^{Y}=x+\Y,\ \widehat{y}^{\Y}=y+\Y$ and $P_{\Y}\colon \X\rightarrow \Y$ is the orthogonal projection onto $\Y$.

Given $(E,G)\in \mathcal{M}$ it holds that $T(E)=E$ for $T\in G$ and hence
the mapping $\widehat{T}_{E}\colon \X\mod E\rightarrow \X\mod E$ given by $\widehat{T}_{E}(\widehat{x}^{E})=T(x+E)$
defines a rotation on $\X\mod E$ for $T\in G$. Indeed, $||\widehat{x}^{E}||_{\X\mod E}=\dist(x,E)$ and 
$\dist(T(x),E)=\dist(x,E)$, as $T(E)=E$. Now, since $\X\mod E$ is finite-dimensional, the rotation group
$\mathcal{G}_{\X\mod E}$ is compact in the operator norm topology.

For each $(E,G)\in \mathcal{M}$ we define a map $\widehat{S}_{E}\colon \X\mod E\rightarrow \H\mod S(E)$
by $\widehat{S}_{E}(\widehat{x}^{E})=S(x+E)$. It is easy to see that 
\begin{equation}\label{eq: ipestimate}
\begin{array}{rl}
 &||S^{-1}||^{-2}\ ||\widehat{x}^{E}||_{\X\mod E}^{2}\leq (\widehat{S}_{E}(\widehat{x}^{E})|\widehat{S}_{E}(\widehat{x}^{E}))_{\H\mod S(E)}\\  & (\widehat{S}_{E}(\widehat{x}^{E})|\widehat{S}_{E}(\widehat{y}^{E}))_{\H\mod S(E)}\leq ||S||^{2}\ ||\widehat{x}^{E}||_{\X\mod E}\ ||\widehat{y}^{E}||_{\X\mod E}
\end{array}
\end{equation}
for $x,y\in \X$. Consider $\R^{\mathcal{M}}$ with the point-wise linear structure. 
Define a map $B\colon \X\times \X\rightarrow \R^{\mathcal{M}}$ by 
\[B(x,y)(E,G)=\int_{\mathcal{G}_{\X\mod E}}(\widehat{S}_{E}(\tau \widehat{x}^{E})|\widehat{S}_{E}(\tau \widehat{y}^{E}))_{\H\mod S(E)}\ \mathrm{d}\tau.\]
Above $\int_{\mathcal{G}_{\X\mod E}}$ is the invariant Haar integral over the compact group $\mathcal{G}_{\X\mod E}$.
The invariance of the integral yields that $B(Tx,Ty)(E,G)=B(x,y)(E,G)$ for $x,y\in \X,\ (E,G)\in\mathcal{M}$ and $T\in G$.
By using \eqref{eq: ipestimate} and the basic properties of the integral we obtain that
\begin{equation}\label{eq: bestimate}
\begin{array}{rl}
 &||S^{-1}||^{-2}\ ||\widehat{x}^{E}||_{\X\mod E}^{2}\leq B(x,x)(E,G)\\ 
 & B(x,y)(E,G)\leq ||S||^{2}\ ||\widehat{x}^{E}||_{\X\mod E}\ ||\widehat{y}^{E}||_{\X\mod E}
\end{array}
\end{equation}
for $x,y\in \X$ and $(E,G)\in \mathcal{M}$.

The family $\{\{\gamma\in\mathcal{M}|\ \gamma\geq \eta\}\}_{\eta\in\mathcal{M}}$ is a filter base on $\mathcal{M}$.
Let $\mathcal{U}$ be a non-principal ultrafilter extending $\{\{\gamma\in\mathcal{M}|\ \gamma\geq \eta\}\}_{\eta\in\mathcal{M}}$.
Put $(x|y)_{\X}=\lim_{\mathcal{U}}B(x,y)$ for $x,y\in\X$. It is easy to see that $(\cdot|\cdot)_{\X}$ is a bilinear mapping. 

According to \eqref{eq: bestimate} we get that $(x|y)_{\X}\leq ||S||^{2}||x||_{\X}||y||_{\X}$. 
Next, we aim to verify that $||S^{-1}||^{-2}||x||_{\X}^{2}\leq (x|x)_{\X}$. Towards this, we will check that 
$\sup_{(E,G)\in \mathcal{M}}||\widehat{x}^{E}||_{\X\mod E}$ $=||x||_{\X}$. Fix $x\in \S_{\X}$.
Assume to the contrary that $\sup_{(E,G)\in \mathcal{M}}||\widehat{x}^{E}||_{\X\mod E}=c<1$. 
Note that $\X$ is reflexive being isomorphic to $\H$. Thus the ball $x+c\B_{\X}$ is weakly compact.
Putting 
\[\{\{y\in E:\ ||x-y||\leq C\}\}_{(E,G)\in \mathcal{M}}\]
defines a net of non-empty closed convex subsets of $x+c\B_{\X}$. This net has a cluster point $z\in x+c\B_{\X}$
according to the weak compactness of $x+c\B_{\X}$. This means that $z\in \bigcap_{(E,G)\in\mathcal{M}} E$, which provides a 
contradiction, since $z\neq 0$. Consequently, \eqref{eq: bestimate} yields that
\[||S^{-1}||^{-2}\ ||x||_{\X}^{2}=||S^{-1}||^{-2}\ \lim_{\mathcal{U}}||\widehat{x}^{E}||_{\X\mod E}^{2}
\leq \lim_{\mathcal{U}}B(x,x)=(x|x)_{\X}.\]

Finally, we claim that $(Tx|Ty)_{\X}=(x|y)_{\X}$ for $x,y\in \X$ and $T\in \mathcal{G}$. 
Indeed, pick $T\in \mathcal{G}$ and $x,y\in\X$.
Then 
\begin{equation*}
\begin{array}{rll}
& &\{(E,G)\in \mathcal{M}:\ B(T(x),T(y))(E,G)=B(x,y)(E,G)\}\\
 &\supset&\{(E,G)\in \mathcal{M}:\ T\in G\}\in \mathcal{U},
\end{array}
\end{equation*}
so that $\lim_{\mathcal{U}}(B(Tx,Ty)-B(x,y))=0$.
\end{proof}

\begin{corollary}
Let $\X$ be a maximally normed space $\X$ isomorphic to a Hilbert space. 
Suppose that there is a subgroup $\mathcal{G}\subset \mathcal{G}(\X)$, which satisfies $(\ast)$ and 
$\mathcal{G}(\X)\subset \overline{\mathcal{G}}^{\mathrm{SOT}}$. Then $\X$ is isometrically a Hilbert space.

\qed
\end{corollary}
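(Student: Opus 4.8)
The plan is to apply Theorem \ref{thm: innerprod} essentially verbatim. Since $\X$ is isomorphic to a Hilbert space, fix an inner product space $\H$ together with an isomorphism $S\colon\X\rightarrow\H$. By hypothesis the subgroup $\mathcal{G}\subset\mathcal{G}(\X)$ satisfies $(\ast)$, so Theorem \ref{thm: innerprod} produces an inner product $(\cdot|\cdot)_{\X}$ on $\X$ whose induced norm $|||x|||\stackrel{\cdot}{=}\sqrt{(x|x)_{\X}}$ is equivalent to $||\cdot||$ by conclusion (1), and which is invariant under every $T\in\overline{\mathcal{G}}^{\mathrm{SOT}}\subset L(\X)$ by conclusion (2). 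Using the standing assumption $\mathcal{G}(\X)\subset\overline{\mathcal{G}}^{\mathrm{SOT}}$, we conclude that $(\cdot|\cdot)_{\X}$ is in fact $\mathcal{G}(\X)$-invariant.

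Next I would feed this back into the maximality of the norm of $\X$. Every $T\in\mathcal{G}_{(\X,||\cdot||)}=\mathcal{G}(\X)$ preserves $(\cdot|\cdot)_{\X}$, hence preserves $|||\cdot|||$, so $\mathcal{G}_{(\X,||\cdot||)}\subset\mathcal{G}_{(\X,|||\cdot|||)}$; since $|||\cdot|||\sim||\cdot||$, the maximality of $||\cdot||$ forces $\mathcal{G}_{(\X,||\cdot||)}=\mathcal{G}_{(\X,|||\cdot|||)}$. Now $(\X,|||\cdot|||)$ is a Hilbert space, so it is transitive (its orthogonal group acts transitively on the unit sphere), and therefore $\mathcal{G}_{(\X,||\cdot||)}$ acts transitively on $\S_{(\X,||\cdot||)}$ as well. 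To close the argument I would run the standard proportionality step: given $x,y\in\S_{(\X,||\cdot||)}$, pick $T\in\mathcal{G}_{(\X,||\cdot||)}$ with $Tx=y$; since $T$ also preserves $|||\cdot|||$ we get $|||y|||=|||Tx|||=|||x|||$, so $|||\cdot|||$ is constant on $\S_{(\X,||\cdot||)}$, i.e. $|||\cdot|||=c\,||\cdot||$ for some $c>0$. Thus $||\cdot||$ is itself induced by an inner product and $\X$ is isometrically a Hilbert space. Alternatively, this last step is exactly the remark preceding Theorem \ref{thm: innerprod}, applied to the two equivalent norms $||\cdot||$ and $|||\cdot|||$, both of which are maximal (here one is even transitive).

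I expect no serious obstacle here: the substantive content is entirely in Theorem \ref{thm: innerprod}. The only points that deserve care are (i) verifying that the invariance in conclusion (2), stated for $T\in\overline{\mathcal{G}}^{\mathrm{SOT}}$, genuinely covers all of $\mathcal{G}(\X)$ — which is where the hypothesis $\mathcal{G}(\X)\subset\overline{\mathcal{G}}^{\mathrm{SOT}}$ enters — and (ii) checking that $|||\cdot|||$ is equivalent to $||\cdot||$ so that the definition of a maximal norm is applicable; both are immediate from the cited statements.
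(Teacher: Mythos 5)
Your proposal is correct and is precisely the argument the paper intends (the corollary is stated with an immediate \qed): apply Theorem \ref{thm: innerprod} to get an equivalent $\overline{\mathcal{G}}^{\mathrm{SOT}}$-invariant, hence $\mathcal{G}(\X)$-invariant, inner-product norm, then invoke maximality and the transitivity of the Hilbert norm to force proportionality. No gaps.
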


In Theorem \ref{thm: innerprod} the isomorphism $S$ was exploited in order to give bounds for the resulting
inner product $(\cdot|\cdot)_{\X}$. In \cite{Ca} a different approach was taken instead; namely the analogous construction
was suitably normalized by using a special point $x_{0}$. By suitably combining the arguments in \cite{Ca} and in the proof of 
Theorem \ref{thm: innerprod} we obtain the following result.
\begin{theorem}
Let $\X$ be a Banach space transitive with respect to a subgroup $\mathcal{G}\subset\mathcal{G}(\X)$, which satisfies $(\ast)$.
Then $\X$ is isometric to a Hilbert space.

\qed
\end{theorem}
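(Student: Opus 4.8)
The plan is to mimic the proof of Theorem~\ref{thm: innerprod}, but to replace the normalization coming from the isomorphism $S$ by the normalization coming from transitivity, as was done in \cite{Ca}. Concretely, fix once and for all a point $x_{0}\in \S_{\X}$. For each pair $(E,G)\in \mathcal{M}$, where $\mathcal{M}$ is exactly the directed set of Theorem~\ref{thm: innerprod} (finite-codimensional $E\subset\X$ with $T(E)=E$ for all $T$ in a finitely generated subgroup $G\subset\mathcal{G}$), the quotient $\X\mod E$ is finite-dimensional and carries the compact rotation group $\mathcal{G}_{\X\mod E}$ with its Haar probability measure. Put
\[
B(x,y)(E,G)=\frac{1}{\mu_{E}}\int_{\mathcal{G}_{\X\mod E}}\bigl(\widehat{T}_{E}\widehat{x}^{E}\,\big|\,\widehat{T}_{E}\widehat{y}^{E}\bigr)_{E}\ \mathrm{d}\tau ,
\]
where $(\cdot|\cdot)_{E}$ is \emph{any} fixed invariant inner product on the finite-dimensional space $\X\mod E$ (these exist by \cite{Auerbach, Mazur}), averaged over the Haar integral as in Theorem~\ref{thm: innerprod}, and $\mu_{E}$ is the renormalizing constant $B(x_{0},x_{0})(E,G)$ so that the $x_{0}$-value is always $1$. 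As in \cite{Ca} and in the preceding proof, the Haar-invariance gives $B(Tx,Ty)(E,G)=B(x,y)(E,G)$ for $T\in G$.

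The next step is to obtain uniform two-sided bounds on $B$ so that a limit along an ultrafilter can be taken. Here transitivity of $\X$ with respect to $\mathcal{G}$ enters twice. First, because the quotient map $\X\mod E$ is a rotation-equivariant quotient and $\mathcal{G}$ acts transitively on $\S_{\X}$, the orbit of $\widehat{x_{0}}^{E}$ under $\mathcal{G}_{\X\mod E}$ (or rather its closure together with the images of all sphere points) is large enough to force a comparison between $\|\widehat{x}^{E}\|_{\X\mod E}$ and the averaged quantity; more precisely one shows, exactly as in \cite[Lemma~2]{Ca}, that after the $x_{0}$-normalization the resulting positive bilinear form $B(\cdot,\cdot)(E,G)$ is comparable to $\|\cdot\|_{\X\mod E}^{2}$ with constants independent of $(E,G)$ — the point being that all unit vectors of $\X$ are equivalent under $\mathcal{G}$, hence their quotient norms and averaged square-norms stay within a fixed ratio. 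Then, just as in Theorem~\ref{thm: innerprod}, one verifies $\sup_{(E,G)}\|\widehat{x}^{E}\|_{\X\mod E}=\|x\|$ using reflexivity of $\X$ (transitive spaces with $(\ast)$ — in fact here $\X$ need not a priori be reflexive, so this is where I expect to have to work: either one argues reflexivity from transitivity via a known result, or one replaces the weak-compactness argument by a direct estimate $\|\widehat{x}^{E}\|_{\X\mod E}\geq(1-\varepsilon)\|x\|$ for suitably chosen cofinal $(E,G)$, which is possible because $E$ ranges over \emph{all} finite-codimensional subspaces invariant under some finitely generated $G$, and one can always shrink to separate a given functional).

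Having these uniform bounds, extend the filter base $\{\{\gamma\in\mathcal{M}\colon \gamma\geq\eta\}\}_{\eta}$ to a non-principal ultrafilter $\mathcal{U}$ and set $(x|y)_{\X}=\lim_{\mathcal{U}}B(x,y)(E,G)$. Bilinearity and positivity pass to the limit; the uniform bounds make $(\cdot|\cdot)_{\X}$ an inner product whose norm is equivalent to $\|\cdot\|$. For invariance, note that for fixed $T\in\mathcal{G}$ the set $\{(E,G)\in\mathcal{M}\colon T\in G\}$ lies in $\mathcal{U}$, and on that set $B(Tx,Ty)=B(x,y)$, so $(Tx|Ty)_{\X}=(x|y)_{\X}$ for all $T\in\mathcal{G}$, and then for all $T\in\overline{\mathcal{G}}^{\mathrm{SOT}}\supset\mathcal{G}$ by the density/continuity argument at the start of the proof of Theorem~\ref{thm: innerprod}. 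Finally, $(\X,|||\cdot|||)$ with $|||x|||=\sqrt{(x|x)_{\X}}$ is a Hilbert space on which $\mathcal{G}$ still acts transitively, so it is homogeneous; since $\mathcal{G}_{(\X,|||\cdot|||)}\supset\mathcal{G}$ acts transitively and $\|\cdot\|$ is $\mathcal{G}$-invariant up to the scaling forced by transitivity, a standard argument (as in the corollaries above, comparing the two transitive norms) gives $\|\cdot\|=c\,|||\cdot|||$, hence $\X$ is isometric to a Hilbert space. The main obstacle, as indicated, is producing the \emph{uniform} lower bound on $B(x,x)(E,G)$ after the $x_{0}$-normalization without a priori access to an isomorphism $S$; this is precisely the place where the quantitative content of \cite[Lemma~2]{Ca} must be imported and combined with the averaging over $\mathcal{G}_{\X\mod E}$.
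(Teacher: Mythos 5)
Your strategy is exactly the one the paper has in mind (the paper offers no written proof, only the remark that one should combine the $x_{0}$-normalization of \cite{Ca} with the quotient/Haar/ultrafilter construction of Theorem~\ref{thm: innerprod}), and your construction of $B(x,y)(E,G)$ over the directed set $\mathcal{M}$ is the right one. However, the step you single out as ``the main obstacle'' --- uniform two-sided bounds on the normalized form, with constants independent of $(E,G)$, obtained by comparing $B(\cdot,\cdot)(E,G)$ to $\|\cdot\|_{\X\mod E}^{2}$ --- is both doubtful as stated and unnecessary. For a fixed $(E,G)$ the normalized form is only controlled on the $G$-orbit of $x_{0}$, and the quotient norm $\|\widehat{x}^{E}\|_{\X\mod E}$ of a unit vector can be arbitrarily small, so no uniform comparison over all of $\mathcal{M}$ should be expected. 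The point is that none is needed: transitivity hands you the answer pointwise on the sphere through the ultrafilter. Given $x\in\S_{\X}$, choose $T_{x}\in\mathcal{G}$ with $T_{x}(x_{0})=x$; by $(\ast)$ the set $\{(E,G)\in\mathcal{M}:\ T_{x}\in G\}$ contains a tail $\{\gamma\geq\eta\}$ and hence lies in $\mathcal{U}$, and on that set the Haar-invariance gives $B(x,x)(E,G)=B(T_{x}x_{0},T_{x}x_{0})(E,G)=B(x_{0},x_{0})(E,G)=1$. This simultaneously shows that the net $B(x,y)(E,G)$ is bounded on a set of $\mathcal{U}$ (via Cauchy--Schwarz, intersecting the sets for $x$ and $y$), so the $\mathcal{U}$-limit exists, and that $(x|x)_{\X}=1=\|x\|^{2}$ for every $x\in\S_{\X}$. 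By homogeneity $(x|x)_{\X}=\|x\|^{2}$ for all $x$, so $\|\cdot\|^{2}$ is a quadratic form and $\X$ is isometrically an inner product space; your closing step comparing two transitive norms, and the entire discussion of $\sup_{(E,G)}\|\widehat{x}^{E}\|_{\X\mod E}=\|x\|$ and reflexivity, can be deleted.

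Two small repairs you should make to the construction itself. First, the normalizer $\mu_{E}=B(x_{0},x_{0})(E,G)$ must be nonzero, which fails if $x_{0}\in E$; fix a functional $f_{0}\in\S_{\X^{\ast}}$ with $f_{0}(x_{0})=1$ and build $\mathcal{M}$ only from pairs with $E\subset\ker(f_{0})$ (property $(\ast)$ allows this and keeps $\mathcal{M}$ directed), which forces $\dist(x_{0},E)\geq 1$ and hence $\mu_{E}>0$. Second, your integrand should read $\tau\widehat{x}^{E}$ rather than $\widehat{T}_{E}\widehat{x}^{E}$, and note that starting from an Auerbach--Mazur invariant inner product and then Haar-averaging is redundant --- either device alone produces a $\mathcal{G}_{\X\mod E}$-invariant inner product on $\X\mod E$, which is all that is used.
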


Theorem \ref{thm1} is an immediate consequence of the following result. 
This result yields that $\X$ must be in particular almost transitive, and we note that there exists an alternative route to this fact, 
since spaces both convex-transitive and superreflexive are additionally almost transitive, see e.g. \cite{Fi}.

\begin{theorem}
Let $\X$ be a Banach space isomorphic to a Hilbert space and suppose $\mathcal{G}\subset \mathcal{G}(\X)$
is a subgroup, which satisfies $(\ast)$ and $\mathcal{G}_{F}\subset \mathcal{G}$. 
Then $\X$ is convex-transitive with respect to $\overline{\mathcal{G}}^{\mathrm{SOT}}\subset L(\X)$ 
if and only if $\X$ is isometric to a Hilbert space.
\end{theorem}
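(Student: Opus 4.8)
The plan is to reduce the "if and only if" to the forward implication, since a Hilbert space is certainly convex-transitive with respect to $\mathcal{G}_F$ (the finite-rank rotations already act convex-transitively on the sphere, so $\overline{\mathcal{G}}^{\mathrm{SOT}}$ does too). For the forward direction, suppose $\X$ is convex-transitive with respect to $\overline{\mathcal{G}}^{\mathrm{SOT}}$. The first step is to produce an inner product on $\X$ invariant under $\overline{\mathcal{G}}^{\mathrm{SOT}}$: since $\X$ is isomorphic to a Hilbert space $\H$ via some isomorphism $S$, and $\mathcal{G}$ satisfies $(\ast)$, Theorem \ref{thm: innerprod} applies directly and yields an inner product $(\cdot|\cdot)_{\X}$ with $\|S^{-1}\|^{-2}\|x\|^2 \le (x|x)_{\X} \le \|S\|^2\|x\|^2$ and invariant under all of $\overline{\mathcal{G}}^{\mathrm{SOT}}$.

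Next I would exploit the interaction between the invariant norm $|||\cdot||| = \sqrt{(\cdot|\cdot)_{\X}}$ and convex-transitivity. By hypothesis $\overline{\conv}(\overline{\mathcal{G}}^{\mathrm{SOT}}(x)) = \B_{(\X,\|\cdot\|)}$ for every $x \in \S_{(\X,\|\cdot\|)}$. Since $|||\cdot|||$ is invariant under $\overline{\mathcal{G}}^{\mathrm{SOT}}$, the orbit $\overline{\mathcal{G}}^{\mathrm{SOT}}(x)$ lies on a single $|||\cdot|||$-sphere, and passing to closed convex hull keeps us inside the corresponding $|||\cdot|||$-ball; conversely the $|||\cdot|||$-ball is $\|\cdot\|$-closed and convex, hence contains $\B_{(\X,\|\cdot\|)}$ up to the appropriate scalar. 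Comparing, one gets that $\|\cdot\|$ and $|||\cdot|||$ are proportional: more precisely, for $x$ with $|||x||| = 1$ the orbit sits on the $|||\cdot|||$-unit sphere, its closed convex hull is the $\|\cdot\|$-ball of radius $\|x\|$, and this forces $\|\cdot\| = c\,|||\cdot|||$ for a constant $c$. One should phrase this via the observation already recorded after the proof of the first theorem in the excerpt, or alternatively use maximality: convex-transitivity of $(\X,\|\cdot\|)$ implies the norm $\|\cdot\|$ is maximal (by \cite{Co}), and $|||\cdot|||$ has a strictly larger isometry group containing $\mathcal{G}$, so $\mathcal{G}_{(\X,\|\cdot\|)} = \mathcal{G}_{(\X,|||\cdot|||)}$; then convex-transitivity transfers to $(\X,|||\cdot|||)$, and a Hilbert space that is convex-transitive already has the two norms proportional since the sphere of a fixed radius is convex-transitively reached.

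I expect the main obstacle to be the clean passage from "an invariant Hilbertian norm exists" to "the original norm is Hilbertian". The subtlety is that $\overline{\mathcal{G}}^{\mathrm{SOT}}$ is an isometry group for $\|\cdot\|$ by construction, and also for $|||\cdot|||$ by Theorem \ref{thm: innerprod}(2), so both norms have this common group acting convex-transitively (for $\|\cdot\|$ by hypothesis, for $|||\cdot|||$ automatically since the $\|\cdot\|$-closed convex hull of an orbit is all of $\B_{(\X,\|\cdot\|)}$, hence its $|||\cdot|||$-closure is everything too). Two norms for which a common group acts with the same orbits, one of them being convex-transitive and maximal, must be proportional — this is exactly the remark inserted in the excerpt after the first theorem's proof, applied with the roles played by $\|\cdot\|$ and $|||\cdot|||$, noting the group generated by the two isometry groups is operator-norm bounded because it is contained in a single bounded group. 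Once proportionality is established, $\|\cdot\|$ is induced by $(\cdot|\cdot)_{\X}$ up to scaling, so $(\X,\|\cdot\|)$ is isometrically a Hilbert space, completing the proof.
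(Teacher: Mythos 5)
Your main line of argument is correct and essentially the paper's own: handle the ``if'' direction via transitivity of $\mathcal{G}_F$ on the sphere of a Hilbert space, invoke Theorem \ref{thm: innerprod} to get an $\overline{\mathcal{G}}^{\mathrm{SOT}}$-invariant equivalent inner-product norm $|||\cdot|||$, and then use convex-transitivity to force $\|\cdot\|=c\,|||\cdot|||$. Your first way of finishing --- the orbit of any $x\in\S_{(\X,\|\cdot\|)}$ lies on the $|||\cdot|||$-sphere of radius $|||x|||$, whose ball is convex and $\|\cdot\|$-closed, so $\B_{(\X,\|\cdot\|)}\subset\{z:|||z|||\le|||x|||\}$ for every unit $x$, forcing $|||\cdot|||$ to be constant on $\S_{(\X,\|\cdot\|)}$ --- is sound and in fact a cleaner rendering of the paper's $\epsilon$-argument. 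However, the two alternative finishes you sketch do not work as stated: the maximality route needs $\mathcal{G}_{(\X,\|\cdot\|)}\subset\mathcal{G}_{(\X,|||\cdot|||)}$, whereas Theorem \ref{thm: innerprod} only gives invariance under the possibly much smaller group $\overline{\mathcal{G}}^{\mathrm{SOT}}$; and the claim that the group generated by the two isometry groups is operator-norm bounded ``because it is contained in a single bounded group'' is unjustified --- no such ambient bounded group is available a priori, and boundedness of that generated group is essentially equivalent to the proportionality you are trying to prove. Stick with your first argument and drop the alternatives.
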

\begin{proof}
First note that a Hilbert space is transitive, in particular convex-transitive, and that 
$\mathcal{G}_{F}\subset\mathcal{G}(\H)$ is $\mathrm{SOT}$-dense in $\mathcal{G}(\H)$, so that the 'if' direction is clear.

Since $\X$ is isomorphic to a Hilbert space, we may apply Theorem \ref{thm: innerprod}
to obtain an $\overline{\mathcal{G}}^{\mathrm{SOT}}$-invariant inner product $(\cdot|\cdot)_{\X}$ on $\X$
such that $|||x|||^{2}=(x|x)_{\X}$ defines a norm equivalent with $||\cdot||_{\X}$. 
Clearly $|||\cdot|||$ is $\overline{\mathcal{G}}^{\mathrm{SOT}}$-invariant as well.  
By rescaling $|||\cdot|||$ we may assume without loss of generality that $||\cdot||_{\X}\leq |||\cdot|||$ 
and $\sup_{y\in \S_{(\X,|||\cdot|||)}}||y||_{\X}=1$. Put $C=\{x\in \X:\ |||x|||\leq 1\}$.

Fix $x\in \S_{(X,||\cdot||_{\X})}$ and $\epsilon>0$. Let $y\in \S_{(\X,|||\cdot|||)}$ be such that $||y||_{\X}>1-\frac{\epsilon}{2}$.
Since $(X,||\cdot||_{\X})$ is convex-transitive with respect to $\overline{\mathcal{G}}^{\mathrm{SOT}}$, we get
that $(1-\frac{\epsilon}{2})x\in \overline{\conv}^{||\cdot||_{\X}}(\{T(y)|T\in \overline{\mathcal{G}}^{\mathrm{SOT}}\})$. 
Since the norms $|||\cdot|||$ and $||\cdot||_{\X}$ are equivalent
we obtain that there is a convex combination 
$\sum a_{n}T_{n}(y)\in \conv(\{T(y)|T\in \mathcal{G}_{F}\})$ such that 
$|||(1-\frac{\epsilon}{2})x-\sum a_{n}T_{n}(y)|||<\frac{\epsilon}{2}$.
By noting that $|||\sum a_{n} T_{n}(y)|||\leq \sum a_{n} |||T_{n}(y)|||$ we get that 
$\sup_{T\in \overline{\mathcal{G}}^{\mathrm{SOT}}}|||T(y)|||\geq |||x|||-\epsilon$. 
Hence $|||y|||\geq |||x|||-\epsilon$ by using the $\overline{\mathcal{G}}^{\mathrm{SOT}}$-invariance of $|||\cdot|||$.
Since $\epsilon$ was arbitrary and $|||x|||\geq 1$, we deduce that $|||x|||=1$, and it follows that $||\cdot||_{X}=|||\cdot|||$. 
\end{proof}

Finally, we will take a different approach and characterize the Hilbert spaces in terms of the subgroup of rotations, 
that, instead of fixing a finite-codimensional subspace, rather fix a given $1$-dimensional subspace.     

\begin{proposition}
Let $\X$ be an almost transitive Banach space. Suppose that there exists $z_{0}\in \S_{\X}$ satisfying that for any 
$\epsilon>0$ and $x,y\in \S_{\X}$ with $\dist(x,[z_{0}])=\dist(y,[z_{0}])=1$, there is $T\in \mathcal{G}(\X)$
such that $||T(z_{0})-z_{0}||<\epsilon$ and $||T(x)-y||<\epsilon$. Then $\X$ is isometric to an inner product space.
\end{proposition}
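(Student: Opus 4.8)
The plan is to reduce the assertion to James's characterisation of inner product spaces by symmetry of Birkhoff--James orthogonality, after extracting the structural content of the hypothesis. First I would dispose of low dimensions: if $\dim\X\le 2$ then $\mathcal G(\X)$ is a bounded, $\mathrm{SOT}$-closed group of automorphisms of $\X$, hence compact, so its orbits on the compact sphere $\S_{\X}$ are compact; an almost transitive action with a dense orbit is then transitive, and a transitive finite-dimensional normed space is Euclidean. So I may assume $\dim\X\ge 3$. Write $M=\{v\in\S_{\X}:\ \dist(v,[z_0])=1\}$, i.e.\ the unit vectors Birkhoff--James orthogonal to $z_0$; note $M\neq\emptyset$ and $-v\in M$ whenever $v\in M$.

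The first substantial step is to show that the hypothesis produces a canonical two-dimensional section at $z_0$. For $v,v'\in M$ and $a,b\in\R$, choosing $T\in\mathcal G(\X)$ with $\|T(z_0)-z_0\|<\epsilon$ and $\|T(v)-v'\|<\epsilon$ gives $\big|\,\|a z_0+b v\|-\|a z_0+b v'\|\,\big|\le(|a|+|b|)\,\epsilon$, and letting $\epsilon\to 0$ shows $\|a z_0+b v\|=\|a z_0+b v'\|$. Hence $\psi(a,b):=\|a z_0+b v\|$ is a well-defined norm on $\R^{2}$, independent of $v\in M$; since $-v\in M$ it is even in the second (and then in each) variable; and every two-dimensional subspace of $\X$ through $z_0$ is isometric to $(\R^{2},\psi)$, being spanned by $z_0$ and some $v\in M$ (namely a unit vector where a supporting line parallel to $[z_0]$ touches the sphere).

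The second step exploits convexity. For fixed $v\in M$ the function $d\mapsto\|z_0+d v\|=\psi(1,d)$ is convex (the norm of an affine function of $d$) and even, so it is minimised at $d=0$; therefore $\|z_0+d v\|\ge\|z_0\|$ for all $d$, i.e.\ $z_0$ is Birkhoff--James orthogonal to $v$. Thus orthogonality is symmetric between $z_0$ and every element of $M$. Next I would transport this to all base points: by almost transitivity each $w\in\S_{\X}$ is a limit of $S_k(z_0)$, $S_k\in\mathcal G(\X)$, and conjugating the hypothesis by $S_k$ and passing to the limit shows that the approximate stabiliser of $w$ is approximately transitive on $M_w=\{u\in\S_{\X}:\ \dist(u,[w])=1\}$; repeating the previous two steps at $w$ yields $w\perp u$ for every $u\in M_w$. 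Since $u\perp w\Leftrightarrow u\in M_w$ for unit vectors, Birkhoff--James orthogonality is symmetric on all of $\X$, and James's theorem gives that $\X$ (being of dimension $\ge 3$) is an inner product space. An alternative ending: the same analysis forces every two-dimensional subspace of $\X$ to be isometric to $(\R^{2},\psi)$, whence $\X$ is Euclidean by the Auerbach--Mazur--Ulam theorem that a real Banach space all of whose two-dimensional subspaces are mutually isometric is Euclidean.

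The hard part is the transport step. Almost transitivity only furnishes isometries $S_k$ with $S_k(z_0)\to w$, and these move all of $\X$, so turning ``the approximate stabiliser of $w$ is approximately transitive on $M_w$'' into the exact identity $\|a w+b u\|=\|a w+b u'\|$ --- and thence into $w\perp u$ --- requires uniform control of the nearest-point projections onto the lines $[S_k(w)]$ and $[w]$ as these vary, hence a preliminary regularity analysis of $\X$ (reflexivity and strict convexity, to be drawn again from almost transitivity together with the hypothesis). The remaining arguments are routine limiting arguments and appeals to classical facts.
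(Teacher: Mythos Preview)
Your first two substantial steps are correct and attractive: the hypothesis does force all two-dimensional sections through $z_{0}$ to be mutually isometric to a single $(\R^{2},\psi)$ with $\psi$ even in the second variable, and the evenness-plus-convexity argument giving $z_{0}\perp v$ for every $v\in M$ is clean.

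The gap is exactly where you locate it, and it is real. To run either ending you must show that for an arbitrary $w\in\S_{\X}$ and $u\in M_{w}$ there exist $u_{k}\in M_{S_{k}(z_{0})}$ with $u_{k}\to u$ (or some equivalent stability statement for Birkhoff--James orthogonality under perturbation of the base point). This lower semicontinuity of the orthogonality relation fails in general normed spaces, and nothing in the hypotheses readily yields the strict convexity you propose to invoke; almost transitivity alone certainly does not give it, and the hypothesis at $z_{0}$ only controls sections \emph{through} $z_{0}$. So the transport step, as written, does not close. The ``alternative ending'' has the same hole: you have only shown that sections through $z_{0}$ (and hence through each $S_{k}(z_{0})$) are isometric to $(\R^{2},\psi)$, not yet that an arbitrary $B$ is.

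The paper sidesteps the whole transport problem by an idea you are missing: instead of moving properties from $z_{0}$ to $w$, it moves the subspace $B$ to $z_{0}$ by \emph{correcting an almost-isometry with a small finite-rank perturbation}. Concretely, pick $w\in\S_{\X}\cap B$ and use almost transitivity to get $T_{1}\in\mathcal{G}(\X)$ with $\|T_{1}w-z_{0}\|<\epsilon/4$; then $S=T_{1}+f(\cdot)(z_{0}-T_{1}w)$ (with $f\in\S_{\X^{\ast}}$, $f(w)=1$) is an isomorphism with $\|S-T_{1}\|<\epsilon/4$ and $S(w)=z_{0}$ exactly, so $S(B)$ is a two-dimensional subspace through $z_{0}$. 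Now the hypothesis applies \emph{at $z_{0}$}: choose $y\in S(B)\cap\S_{\X}$ with $\dist(y,[z_{0}])=1$ and $x\in A\cap\S_{\X}$ likewise, get $T_{2}\in\mathcal{G}(\X)$ sending $(z_{0},y)$ within $\epsilon/4$ of $(z_{0},x)$, and correct by a rank-two map to an isomorphism $U$ with $U(z_{0})=z_{0}$, $U(y)=x$, $\|U-T_{2}\|<\epsilon$. Then $U\circ S$ maps $B$ onto $A$ and is an $O(\epsilon)$-perturbation of an isometry; letting $\epsilon\to 0$ gives $d_{BM}(A,B)=1$, and Auerbach--Mazur--Ulam finishes. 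No regularity of $\X$ is needed, and the orthogonality hypothesis is only ever invoked at the fixed point $z_{0}$.
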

\begin{proof}
It is well-known (see e.g. \cite{BR}) that almost transitive finite-dimensional spaces are isometric to Hilbert spaces. 
Hence we may concentrate on the case $\dim(\X)\geq 3$. Let $A,B\subset \X$ be $2$-dimensional subspaces such that $z_{0}\in A$.
Recall the classical result that a Banach space is isometric to a Hilbert space if and only if 
any couple of $2$-dimensional subspaces are mutually isometric (see \cite{AuMU}). 
Thus, in order to establish the claim, it suffices to verify that the subspaces $A$ and $B$
are isometric. 

Fix $0<\epsilon<1$, $x\in\S_{\X}\cap A$ such that $\dist(x,[z_{0}])=1$ and $w\in \S_{\X}\cap B$.
Let $f\in \S_{\X^{\ast}}$ be such that $f(w)=1$. 

Since $\X$ is almost transitive, there is $T_{1}\in \mathcal{G}(\X)$ such that 
$||T_{1}(w)-z_{0}||<\frac{\epsilon}{4}$. 
Define a linear operator $S\colon \X\rightarrow \X$ by $S(v)=T_{1}(v)+f(v)(z_{0}-T_{1}(w))$ for $v\in \X$ and note that
$S(w)=z_{0}$. Observe that $S$ is an isomorphism, since $||T_{1}-S_{1}||<\frac{\epsilon}{4}$.
Pick $y\in \S_{\X}\cap S(B)$ such that $\dist(y,[z_{0}])=1$. According to the assumptions there is $T_{2}\in\mathcal{G}(\X)$
such that $\max(||T_{2}(z_{0})-z_{0}||,||T_{2}(y)-x||)<\frac{\epsilon}{4}$. Let $g,h\in 2\B_{\X^{\ast}}$ be such that
$g(z_{0})=h(y)=1$, $y\in \ker(g)$ and $z_{0}\in \ker(h)$. 
Define a linear operator $U\colon \X\rightarrow \X$ by 
\begin{equation*}
\begin{array}{cc}
&U(v)=T_{2}(v)+g(v)(z_{0}-T_{2}(z_{0}))+h(v)(x-T_{2}(y))\quad \mathrm{for}\ v\in \X.
\end{array}
\end{equation*}
Note that $U(z_{0})=z_{0}$ and $U(y)=x$. Moreover, $||T_{2}-U||<\epsilon$, so that 
$U$ is an isomorphism. Observe that $U\circ S$ maps $B$ linearly onto $A$. 
We conclude that $A$ and $B$ are almost isometric, since $\epsilon$ was arbitrary.
Hence, being finite-dimensional spaces, $A$ and $B$ are isometric.
\end{proof}


\begin{thebibliography}{15}
\bibitem{Auerbach}
H. Auerbach, Sur les groupes lin\'{e}aires born\'{e}s I, Studia Math. 4 (1934), 113-127. 
\bibitem{AuMU}
H. Auerbach, S. Mazur, S. Ulam,
Sur une propri\'{e}t\'{e} caract\'{e}ristique de l'ellipso\"{i}de, Monatsh. Math. Phys. 42 (1935), 45-48. 
\bibitem{BR}
J. Becerra Guerrero, A. Rodríguez-Palacios, 
Transitivity of the norm on Banach spaces,  Extracta Math.  17 (2002),  1-58.
\bibitem{Ca0}
F. Cabello S\'{a}nchez, Regards sur le probl\`{e}me des rotations de Mazur, Extracta Math. 12 (1997), 97-116.
\bibitem{Ca}
F. Cabello S\'{a}nchez, A theorem on isotropic spaces, Studia Math. 133 (1999), 257-260.
\bibitem{Co}
E.R. Cowie, A note on uniquely maximal Banach spaces, Proc. Edinburgh Math. Soc. 26 (1983), 85-87.
\bibitem{Fi}
C. Finet, Uniform convexity properties of norms on a super-reflexive Banach space, Israel J. Math. 53 (1986), 81-92. 
\bibitem{GP}
T. Giordano, V. Pestov, Some extremely amenable groups related to operator algebras and ergodic theory,
J. Inst. Math. Jussieu 6 (2007), 279-315.
\bibitem{HHZ}
M. Fabian, P. Habala, P. Hajek, V. Montesinos Santalucia, J. Pelant, V. Zizler, 
\emph{Functional Analysis and Infinite-dimensional Geometry}, CMS Books in Mathematics, Springer-Verlag, New York, 2001
\bibitem{Mazur}
S. Mazur, Quelques propri\'{e}t\'{e}s caract\'{e}ristiques des espaces euclidiens, C. R. Acad. Sci. Paris 207 (1938), 761-764.
\bibitem{R}
N. Rickert, Amenable Groups and Groups with the Fixed Point Property, Trans. Amer. Math. Soc. 127, (1967), 221-232.
\bibitem{S}
B. Sims, 'Ultra'-techniques in Banach space theory, Queen's Papers in Pure and Applied Mathematics, 60, Queen's University 1982.
\end{thebibliography}
\end{document}